\numberwithin{equation}{section}
\renewcommand{\geq}{\geqslant}
\renewcommand{\ge}{\geqslant}
\let\cal=\mathcal
\def\Cl#1{\ensuremath{\cal{#1}}}
\newcommand{\cont}{\mathcal{C}}
\theoremstyle{plain}
\newtheorem{Thm}{Theorem}[section]
\newtheorem{Prop}[Thm]{Proposition}
\newtheorem{Lemma}[Thm]{Lemma}
\newtheorem{Cor}[Thm]{Corollary}
\theoremstyle{definition}
\newtheorem{Remark}[Thm]{Remark}
\newtheorem{eg}[Thm]{Example}
\begin{document}

\title{What makes a Stone topological algebra profinite} \thanks{The
  first author acknowledges partial support by CMUP
  (UID/MAT/00144/2020) which is funded by FCT (Portugal) with national
  (MATT'S) and European structural funds (FEDER) under the partnership
  agreement PT2020. %
  The work was carried out in part at Masaryk University, whose
  hospitality is gratefully acknowledged, with the support of the FCT
  sabbatical scholarship SFRH/BSAB/142872/2018. %
  The second author is grateful for the financial support provided by
  the Centre for Mathematics of the University of Coimbra
  (UIDB/00324/2020, funded by the Portuguese Government through
  FCT/MCTES), the Centre for Mathematics of the University of Porto
  (UIDB/00144/2020, funded by the Portuguese Government through
  FCT/MCTES), as well as a PhD grant from FCT/MCTES
  (PD/BD/150350/2019). %
  The third author was supported by Grant 19-12790S of the Grant
  Agency of the Czech Republic.}

\author[J. Almeida]{Jorge Almeida}%
\address{CMUP, Dep.\ Matem\'atica, Faculdade de Ci\^encias,
  Universidade do Porto, Rua do Campo Alegre 687, 4169-007 Porto,
  Portugal}
\email{jalmeida@fc.up.pt}

\author[H. Goulet-Ouellet]{Herman Goulet-Ouellet}%
\address{University of Coimbra, CMUC, Department of Mathematics,
  Apartado 3008, EC Santa Cruz,
  3001-501 Coimbra, Portugal}%
\email{hgouletouellet@student.uc.pt}

\author[O. Kl\'ima]{Ond\v rej Kl\'ima}%
\address{Dept.\ of Mathematics and Statistics, Masaryk University,
  Kotl\'a\v rsk\'a 2, 611 37 Brno, Czech Republic}%
\email{klima@math.muni.cz}

\keywords{Stone topological algebra, profinite algebra, syntactic
  congruence}

\makeatletter
\@namedef{subjclassname@2020}{%
  \textup{2020} Mathematics Subject Classification}
\makeatother
\subjclass[2020]{Primary 46H05; Scondary 06E15, 08A62, 54H15, 54D45, 54C35}

\begin{abstract}
  This paper is a contribution to understanding what properties should
  a topological algebra on a Stone space satisfy to be profinite. We
  reformulate and simplify proofs for some known properties using
  syntactic congruences. We also clarify the role of various
  alternative ways of describing syntactic congruences, namely by
  finite sets of terms and by compact sets of continuous self mappings
  of the algebra.
\end{abstract}

\maketitle

\section{Introduction}
\label{sec:into}

Profinite algebras, that is, inverse limits of inverse systems of
finite algebras, appear naturally in several contexts. There is an
extensive theory of profinite groups, which appear as Galois groups
but are also studied as a generalization of finite groups
\cite{Fried&Jarden:2008,Ribes&Zalesskii:2010}. One may also view
$p$-adic number theory as an early study of special profinite
algebraic structures~\cite{Robert:2000}. In the context of general
algebras, profinite topologies seem to have first appeared
in~\cite{Birkhoff:1937}. A fruitful line of development came about
with the discovery that formal equalities between elements of free
profinite algebras may be used to describe pseudovarieties
\cite{Reiterman:1982,Banaschewski:1983}, which are classes of finite
algebras of a fixed type closed under taking homomorphic images,
subalgebras, and finite direct products. Since pseudovarieties play an
important role in algebraic theories developed for computer science,
profinite algebras are also a useful tool in that context. In
particular, profinite semigroups have been extensively used (see, for
instance, \cite{Almeida:1994a, Weil:2002, Almeida&Volkov:2001a,
  Almeida:2003cshort, Rhodes&Steinberg:2009qt, Almeida&Costa:2015hb,
  Almeida&ACosta&Kyriakoglou&Perrin:2020b}).

As the underlying topological structure of a profinite algebra is
compact and 0-dimensional, that is, a Stone space, profinite algebras
may also be viewed as dual spaces of Boolean algebras. For finitely
generated relatively free profinite algebras, the corresponding
Boolean algebras have special significance
\cite[Theorem~3.6.1]{Almeida:1994a} and are particularly relevant in
the applications of finite semigroup theory to the theory of formal
languages, where they appear as Boolean algebras of regular languages.
The dual role of the algebraic operations in relatively free profinite
algebras has also been investigated
\cite{Gehrke&Grigorieff&Pin:2008,Gehrke&Grigorieff&Pin:2010,Gehrke:2016a}.

For certain classes of (topological) algebras, it turns out that being
a Stone space is sufficient to guarantee profiniteness. Special cases
were considered in~\cite{Numakura:1957} but the essential ingredient
lies in the fact that syntactic congruences are determined by finitely
many terms \cite{Almeida:1989b, Clark&Davey&Freese&Jackson:2004} using
an idea of Hunter \cite{Hunter:1988} that may be traced back to
Numakura~\cite{Numakura:1957}. For such classes of algebras,
profiniteness is thus a purely topological property, although this is
not true in general.

Recently, several characterizations of profiniteness in a Stone
topological algebra $A$ have been obtained
in~\cite{Schneider&Zumbragel:2017}. They are formulated in terms of
topological properties of the translation monoid of the algebra $A$,
which is a submonoid of the monoid of continuous transformations
of~$A$, which is itself a topological monoid under the compact-open
topology.

We explore further the role of syntactic congruences in the
characterization of profiniteness. This leads to an extension to
topological algebras over topological signatures of Gehrke's
sufficient condition for a quotient of a profinite algebra to be
profinite, with a simplified proof. The proofs of Schneider and
Zumbr\"agel's characterizations of profiniteness in this language are
also somewhat simplified. The key ingredient is quite simple: a Stone
topological algebra is profinite if and only if the syntactic
congruence of every clopen subset is clopen.

As already mentioned above, the existence of descriptions of the
syntactic congruences of clopen subsets by a finite number of terms is
a sufficient condition for profiniteness. In such terms, all but one
variable are evaluated to arbitrary values in the algebra, which means
that potentially infinitely many polynomials in one variable are used.
In a profinite algebra, each such syntactic congruence may in fact be
described by finitely many linear polynomials in one variable, which
may depend on the congruence. We explore more generally the property
of a syntactic congruence being determined by a compact set of
continuous self mappings of the algebra, a property that, for a clopen
subset of a locally compact algebra is equivalent to the congruence
being clopen. This leads to several further characterizations of
profiniteness for a Stone topological algebra in terms of how their
syntactic congruences may be described.

\section{Topological algebras}
\label{sec:top-algebras}

We say that an equivalence relation on a topological space $X$ is
\emph{closed} (respectively \emph{open} or \emph{clopen}) if it is a
closed (respectively open or clopen) subset of the product space
$X\times X$. It is easy to verify that an equivalence relation is open if
and only if it is clopen, if and only if its classes are open, if and
only if its classes are clopen (see, for instance,
\cite[Exercise~3.40]{Almeida&ACosta&Kyriakoglou&Perrin:2020b}). For a
closed equivalence relation, the classes are closed, but the converse
fails in general
\cite[Exercise~3.39]{Almeida&ACosta&Kyriakoglou&Perrin:2020b}. Given
an equivalence relation $\theta$ on a topological space~$X$, the
quotient set $X/\theta$ is endowed with the largest topology that
renders continuous the natural mapping $X\to X/\theta$. In particular,
a set of $\theta$-classes is closed (respectively, open) if and only
if so is its union in~$X$.

Unlike some literature on topology, we require the Hausdorff
separation property for a space to be locally compact or compact.

The following observation may be considered as an exercise in
topology, based on \cite[Chapter~I, \S10.4,
Proposition~8]{Bourbaki:1998GTa}.

\begin{Prop}
  \label{p:quotient-Hausdorff}
  Let $X$ be a compact space and $\theta$ an equivalence relation
  on~$X$. Then the quotient space $X/\theta$ is compact if and only if
  $\theta$ is closed.
\end{Prop}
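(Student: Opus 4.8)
The plan is to separate the two implications, noting first that since $X$ is compact and the quotient map $q\colon X\to X/\theta$ is continuous and surjective, the space $X/\theta$ is automatically quasi-compact. In the convention of this paper, compactness includes the Hausdorff property, so the real content of the statement is the equivalence between $X/\theta$ being Hausdorff and $\theta$ being closed; quasi-compactness comes for free. For the easy direction, I would assume $X/\theta$ is compact, hence Hausdorff, so that the diagonal $\Delta$ of $(X/\theta)\times(X/\theta)$ is closed. Since the product map $q\times q\colon X\times X\to (X/\theta)\times(X/\theta)$ is continuous and $\theta=(q\times q)^{-1}(\Delta)$, the relation $\theta$ is closed as the preimage of a closed set.

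For the converse, the central step I would establish is that $q$ is a \emph{closed map}, and for this the key observation is that the saturation of any closed set is closed. Given a closed set $F\subseteq X$, one has $q^{-1}(q(F))=\pi_1\bigl((X\times F)\cap\theta\bigr)$, where $\pi_1\colon X\times X\to X$ denotes the first projection. Here $(X\times F)\cap\theta$ is closed, being the intersection of two closed sets; and because $X$ is compact, the projection $\pi_1$ along the compact factor is a closed map. Hence $q^{-1}(q(F))$ is closed, which by the description of the quotient topology recalled above (a union of $\theta$-classes is closed in $X/\theta$ exactly when its union in $X$ is closed) means that $q(F)$ is closed. Thus $q$ is a closed map.

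It then remains to deduce the Hausdorff property. I would take distinct classes $a,b\in X/\theta$; their fibres $q^{-1}(a),q^{-1}(b)$ are disjoint closed subsets of the compact Hausdorff, hence normal, space $X$, so they can be separated by disjoint open sets $U\supseteq q^{-1}(a)$ and $V\supseteq q^{-1}(b)$. Pushing the closed complements forward along the closed map $q$ yields the sets $W_a:=(X/\theta)\setminus q(X\setminus U)$ and $W_b:=(X/\theta)\setminus q(X\setminus V)$, which are open. One checks that $W_a=\{c:q^{-1}(c)\subseteq U\}$ and similarly for $W_b$, so $a\in W_a$, $b\in W_b$, and any common point $c\in W_a\cap W_b$ would force the nonempty fibre $q^{-1}(c)$ into $U\cap V=\emptyset$, a contradiction. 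Hence $W_a$ and $W_b$ are disjoint, and $X/\theta$ is Hausdorff.

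The main obstacle is the closedness of $q$, and more precisely the fact that the projection $\pi_1$ is a closed map; this is exactly where compactness of $X$ enters, via the tube lemma, and it is the substance of the cited Bourbaki result. Once that is in hand, the separation argument is a routine application of normality of compact Hausdorff spaces, and the forward implication is immediate from the closedness of the diagonal in a Hausdorff space.
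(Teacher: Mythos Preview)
Your argument is correct. The paper does not actually prove this proposition: it simply states that the result ``may be considered as an exercise in topology, based on \cite[Chapter~I, \S10.4, Proposition~8]{Bourbaki:1998GTa}'', so there is no proof to compare against beyond the Bourbaki reference itself. What you have written is essentially the standard Bourbaki argument: the forward direction via the preimage of the diagonal, and the converse via showing that the quotient map is closed (using that projection along a compact factor is closed) and then separating fibres by normality. One tiny point you left implicit is that the fibres $q^{-1}(a)$ and $q^{-1}(b)$ are closed; this follows immediately since each $\theta$-class is a slice of the closed set~$\theta$, and the paper even remarks just before the proposition that classes of a closed equivalence relation are closed.
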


Following \cite{Schneider&Zumbragel:2017}, by a \emph{signature} we
mean a sequence $\Omega=(\Omega_n)_{n\in\mathbb{N}}$ of sets. We say
that it is a \emph{topological signature} if each set $\Omega_n$ is
endowed with a topology.

An \emph{$\Omega$-algebra} is a pair $(A,E)$, where $A$ is a set and
$E=(E_n^A)_{n\in\mathbb{N}}$ is a sequence of \emph{evaluation}
mappings $E_n^A:\Omega_n\times A^n\to A$. In case $A$ is a topological
space and $\Omega$ is a topological signature, we say that the
$\Omega$-algebra is a \emph{topological $\Omega$-algebra} if each
mapping $E_n^A$ is continuous. For an $\Omega$-algebra $(A,E)$ and
$w\in\Omega_n$, we let $w_A:A^n\to A$ be the operation defined by
\begin{displaymath}
  w_A(a_1,\ldots,a_n)=E_n^A(w,a_1,\ldots,a_n).
\end{displaymath}

Reference to the sequences $E$ and $\Omega$, which should be
understood from the context, will usually be omitted and so we simply
say that $A$ is an algebra.

We view finite algebras as discrete topological algebras. Note that
the requirement that the evaluation mappings be continuous may still
be nontrivial. For instance, if we take the signature $\Omega$ with
one binary operation symbol and $\Omega_1$ the one point ($\infty$)
compactification of~$\mathbb{N}$, we may consider finite semigroups as
$\Omega$-algebras by interpreting the binary operation symbol as the
semigroup multiplication, each unary operation symbol $n\in\mathbb{N}$
as the $n!$ power and $\infty$ as the unique idempotent power.
The evaluation mappings are continuous, and so we obtain a topological
$\Omega$-algebra. However, it is sometimes useful to consider an
(``unnatural'') interpretation of the operation symbol $\infty$ that
makes the evaluation mapping $E_1$ discontinuous. This idea underlies
the recent paper~\cite{Almeida&Costa&Zeitoun:2016}.

For a class \Cl K of topological algebras, we say that a topological
algebra $A$ is \emph{residually \Cl K} if, for every pair $a,a'$ of
distinct elements of~$A$, there is a continuous homomorphism
$\varphi:A\to B$ into a member $B$ of~\Cl K such that
$\varphi(a)\ne\varphi(b)$.

A topological algebra $A$ is said to be \emph{profinite} if it is
residually finite and $A$ is compact. Equivalently, $A$ is an inverse
limit of finite discrete algebras (see, for instance,
\cite{Almeida&Costa:2015hb}).

A continuous mapping from a topological space $X$ to a topological
algebra $A$ is said to be a \emph{generating mapping} if its image
generates (algebraically) a dense subalgebra of~$A$. In case $X$ is a
subset of~$A$, we say that the topological algebra $A$ is
\emph{generated} by~$X$ if the inclusion $X\hookrightarrow A$ is a
generating mapping. We also say that $A$ is \emph{$X$-generated} if
there is a generating mapping $X\to A$ and that it is \emph{finitely
  generated} if it is $X$-generated for some finite set $X$.

By a \emph{congruence} on an $\Omega$-algebra $A$ we mean an
equivalence relation $\theta$ that is compatible with the $\Omega$
operations. The set $A/\theta$ of all congruence classes $a/\theta$
inherits a natural structure of $\Omega$-algebra: for $w\in\Omega_n$,
$w_{A/\theta}(a_1/\theta,\ldots,a_n/\theta)=w_A(a_1,\ldots,a_n)/\theta$.
In case $A$ is a topological $\Omega$-algebra, the quotient algebra
$A/\theta$ is a topological $\Omega$-algebra for the quotient
topology.

For a set $X$, let \emph{$T_\Omega(X)$} be the \emph{$\Omega$-term
  algebra}, that is, the absolutely free $\Omega$-algebra. The
elements of $T_\Omega(X)$ are usually viewed in computer science as
trees whose leaves are labeled by elements of~$X$ or of~$\Omega_0$ and
each non-leaf node is labeled by an element of some~$\Omega_n$, in
which case the node has exactly $n$ sons.

For instance, for $u\in\Omega_2$, $v\in\Omega_3$, and $w\in\Omega_0$,
the term
\begin{displaymath}
  u(v(x_1,u(w,x_1),x_3),u(x_3,x_2))
\end{displaymath}
is represented by the labeled tree pictured below.
\begin{center}
  \begin{tikzpicture}[level/.style={sibling distance = 9cm/#1,
      level distance = 20mm},
    treenode/.style = {align=center, text centered},
    root/.style = {treenode, shape=circle, draw},
    op/.style = {treenode, shape=circle, draw},
    leaf/.style = {treenode, draw},
    scale=.5]
    \node [root] {$u$}
    child {node [op] {$v$}
      child {node [leaf] {$x_1$}}
      child {node [op] {$u$}
        child {node [op] {$w$}}
        child {node [leaf] {$x_1$}}
      }
      child {node [leaf] {$x_3$}}
    }
    child {node [op] {$u$}
      child {node [leaf] {$x_3$}}
      child {node [leaf] {$x_2$}}
    };
  \end{tikzpicture}
\end{center}
If the tree representing the term $t$ has exactly one occurrence of
$x\in X$ as a leaf label, then we say that $t$ is \emph{linear
  in}~$x$. For instance, the term above is linear only in~$x_2$.

Let $\varphi:X\to A$ be any mapping. Since the term algebra
$T_\Omega(X)$ is the absolutely free algebra over the set $X$, there
is a unique homomorphism $\hat{\varphi}:T_\Omega(X)\to A$ such that
$\hat{\varphi}\circ\iota=\varphi$, where $\iota$ is the inclusion
mapping of $X$ in~$T_\Omega(X)$. In case $X=\{x_1,\ldots,x_m\}$, we
also denote $\hat{\varphi}(t)$ by
$t_A\bigl(\varphi(x_1),\ldots,\varphi(x_m)\bigr)$ for $t\in
T_\Omega(X)$. In this case, given elements $a_j\in A$ ($j\ne i$), a
term $t\in T_\Omega(X)$ determines a \emph{polynomial transformation}
of $A$ given by
\begin{displaymath}
  a\mapsto t_A(a_1,\ldots,a_{i-1},a,a_{i+1},\ldots,a_m).
\end{displaymath}
Note that, when $A$ is a topological algebra, such transformations are
continuous. The \emph{translation monoid} $M(A)$ consists of all such
polynomial transformations given by terms that are linear in the
distinguished variable $x_i$.

More generally, for two topological spaces $X$ and $Y$, let
$\cont(X,Y)$ be the set of continuous mappings from $X$\ to $Y$; it is
endowed with the compact-open topology, for which a subbase consists
of all sets of the form
\begin{displaymath}
  [K,U]=\bigl\{f\in\Cl C(X,Y): f(K)\subseteq U\bigr\},
\end{displaymath}
where $K\subseteq X$ and $U\subseteq Y$ are respectively compact and
open. Subsets of $\cont(X,Y)$ are endowed with the induced topology.
We also write $\cont(X)$ for $\cont(X,X)$. Note that, in case $A$ is a
topological algebra, $M(A)$ is a submonoid of the monoid $\Cl C(A)$.

Note that, if $X$ is compact 0-dimensional (that is, a \emph{Stone
  space}), then we may restrict the choice of both $K$ and $U$ to be
clopen subsets of~$X$ in the subbasic open sets $[K,U]$ of the
compact-open topology of $\cont(X)$ (cf.~\cite[Chapter~X, \S3.4,
Remark~2]{Bourbaki:1998GTb}).

\section{Syntactic congruences}
\label{sec:syntactic-congruences}

We say that an equivalence relation $\theta$ on a set $A$
\emph{saturates} a subset $L$ of~$A$ if $L$ is a union of
$\theta$-classes.
The following purely algebraic result is well known. We provide a
proof for the sake of completeness.

\begin{Lemma}
  \label{l:synt-congr}
  For an algebra $A$ and a subset $L$ of~$A$, the set of all pairs
  $(a,a')\in A\times A$ such that
  \begin{displaymath}
    \forall f\in M(A)\ \bigl(f(a)\in L \iff f(a')\in L\bigr)
  \end{displaymath}
  is the largest congruence on $A$ saturating~$L$.
\end{Lemma}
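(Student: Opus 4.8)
The plan is to show that the relation defined by the displayed condition is at once an equivalence relation, a congruence saturating~$L$, and the largest such congruence, treating these properties in turn. Write $(a,a')\in\theta$ for the stated condition. The equivalence-relation axioms are immediate: reflexivity and symmetry are obvious, and transitivity follows because, for each fixed $f$, the biconditional $f(a)\in L\iff f(a')\in L$ is transitive in the pair $(a,a')$. For saturation, I would note that the identity transformation lies in $M(A)$, being induced by the one-variable term $x_i$ (trivially linear in~$x_i$); taking $f$ to be the identity in the defining condition then yields $a\in L\iff a'\in L$ whenever $(a,a')\in\theta$, so $L$ is a union of $\theta$-classes.

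The central step is to verify that $\theta$ is a congruence, and this is where the linearity restriction in the definition of $M(A)$ does the real work. Fix $w\in\Omega_n$ and pairs $(a_j,a_j')\in\theta$ for $j=1,\dots,n$; the goal is $\bigl(w_A(a_1,\dots,a_n),\,w_A(a_1',\dots,a_n')\bigr)\in\theta$. Following the usual strategy, I would change one coordinate at a time and conclude by transitivity, so it suffices to treat a single coordinate, say the first: given $f\in M(A)$, I must show $f\bigl(w_A(a_1,a_2,\dots,a_n)\bigr)\in L\iff f\bigl(w_A(a_1',a_2,\dots,a_n)\bigr)\in L$. The key observation is that the map $g\colon x\mapsto f\bigl(w_A(x,a_2,\dots,a_n)\bigr)$ again belongs to $M(A)$: if $f$ is induced by a term $s$ linear in its distinguished variable, then $g$ is induced by the term obtained from $s$ by substituting $w(x_1,x_2,\dots,x_n)$ into that distinguished position and evaluating the remaining variables at the fixed values, and since $w(x_1,\dots,x_n)$ is linear in $x_1$ while $s$ had a single occurrence of the slot being filled, the resulting term has exactly one occurrence of $x_1$ and is thus linear in~$x_1$. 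Applying the hypothesis $(a_1,a_1')\in\theta$ to this $g$ then yields precisely the desired biconditional. The hard part will be exactly this closure property of $M(A)$ under substitution into a single operation, as it is the one point where the precise definition of the translation monoid is essential; the nullary case $n=0$ is vacuous.

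Finally, for maximality, let $\rho$ be any congruence saturating~$L$ and take $(a,a')\in\rho$. Every $f\in M(A)$ is a polynomial transformation built from the operations of~$A$, and congruences are compatible with such transformations, so $(a,a')\in\rho$ forces $\bigl(f(a),f(a')\bigr)\in\rho$. Since $\rho$ saturates $L$, the two $\rho$-equivalent elements $f(a)$ and $f(a')$ belong to $L$ simultaneously, giving $f(a)\in L\iff f(a')\in L$ for every $f\in M(A)$; hence $(a,a')\in\theta$ and $\rho\subseteq\theta$. Combined with the previous paragraph, which shows $\theta$ is itself a congruence saturating~$L$, this establishes that $\theta$ is the largest congruence saturating~$L$, completing the proof.
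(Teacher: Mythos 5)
Your proposal is correct and follows essentially the same route as the paper's proof: the congruence property is established one coordinate at a time via the substituted polynomial transformation $g(x)=f\bigl(w_A(x,a_2,\ldots,a_n)\bigr)$, saturation comes from taking $f$ to be the identity, and maximality uses that polynomial transformations preserve any congruence saturating~$L$. The only difference is that you spell out explicitly why $g$ again lies in $M(A)$ (the linearity count under substitution), a point the paper leaves implicit; this is a welcome clarification rather than a different argument.
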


\begin{proof}
  Let $\theta$ denote the set of pairs in the statement of the lemma.
  Note that it is an equivalence relation on~$A$. Let $w\in\Omega_n$
  and $(a_i,a_i')\in\theta$ for $i=1,\ldots,n$ and suppose that $f\in
  M(A)$ is such that $f\bigl(w_A(a_1,\ldots,a_n)\bigr)\in L$.
  Considering the polynomial transformation $g$ given by
  $g(x)=f\bigl(w_A(x,a_2,\ldots,a_n)\bigr)$, we deduce from
  $(a_1,a_1')\in\theta$ that $f\bigl(w_A(a_1',a_2,\ldots,a_n)\bigr)\in
  L$. Proceeding similarly on each of the remaining components, we see
  that $f\bigl(w_A(a_1',\ldots,a_n')\bigr)\in L$. Hence, $\theta$ is a
  congruence on~$A$. Taking for $f$ the identity transformation of~$A$
  we conclude that $\theta$ saturates~$L$.

  Now, suppose that $\rho$ is a congruence on~$A$ saturating~$L$ and
  let $(a,a')\in\rho$ and $f\in M(A)$. Because polynomial
  transformations of~$A$ preserve $\rho$-equivalence, we have
  $\bigl(f(a),f(a')\bigr)\in\rho$. Since $\rho$ saturates~$L$, it
  follows that $f(a)\in L$ if and only if $f(a')\in L$, which shows
  that $(a,a')\in\theta$. Hence, $\rho$ is contained in~$\theta$,
  thereby completing the proof of the lemma.
\end{proof}

The congruence of the lemma is called the \emph{syntactic congruence}
of~$L$ on~$A$ and it is denoted $\sigma_L^A$.

\begin{Remark}
  \label{rk:synt-congr}
  Let $L$ be a subset of an algebra $A$ and let $\alpha_L$ be the
  equivalence relation whose classes are $L$ and $A\setminus L$. Then
  we may reformulate the definition of the syntactic congruence by the
  following formula:
  \begin{equation}
    \label{eq:synt-congr}
    \sigma_L^A
    =\bigcap_{f\in M(A)}(f\times f)^{-1}(\alpha_L)
    =\bigcap_{f\in M(A)}\alpha_{f^{-1}(L)}.
  \end{equation}
\end{Remark}

\begin{Cor}
  \label{c:synt-congr-closed}
  Let $L$ be a clopen subset of a topological algebra~$A$. Then the
  syntactic congruence $\sigma_L^A$ is closed.
\end{Cor}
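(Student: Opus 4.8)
The plan is to exploit the formula from Remark~\ref{rk:synt-congr}, which expresses the syntactic congruence as an intersection
\begin{displaymath}
  \sigma_L^A=\bigcap_{f\in M(A)}\alpha_{f^{-1}(L)}.
\end{displaymath}
Since an arbitrary intersection of closed sets is closed, it suffices to show that each individual set $\alpha_{f^{-1}(L)}$ is a closed subset of $A\times A$. I would therefore reduce the whole statement to a single observation about one polynomial transformation at a time.

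First I would observe that each $f\in M(A)$ is continuous, as noted in the excerpt, since polynomial transformations of a topological algebra are continuous. Because $L$ is clopen, its preimage $f^{-1}(L)$ is clopen in $A$; in particular it is closed, and so is its complement $A\setminus f^{-1}(L)$. Next I would recall that $\alpha_{f^{-1}(L)}$ is the equivalence relation with exactly two classes, namely $f^{-1}(L)$ and its complement. As a subset of $A\times A$, this relation is the union
\begin{displaymath}
  \bigl(f^{-1}(L)\times f^{-1}(L)\bigr)\cup\bigl((A\setminus f^{-1}(L))\times(A\setminus f^{-1}(L))\bigr).
\end{displaymath}
Each factor in each product is closed, so each product is closed in $A\times A$, and the union of two closed sets is closed. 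Hence $\alpha_{f^{-1}(L)}$ is closed for every $f$.

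Combining these two steps, $\sigma_L^A$ is an intersection of closed subsets of $A\times A$ and is therefore closed, as required. I do not expect any serious obstacle here: the only points requiring a little care are the continuity of the polynomial transformations (already granted in the discussion preceding the translation monoid) and the elementary fact that a two-class equivalence relation built from a clopen set is closed in the product. It is worth emphasizing that clopenness of $L$ is used only to guarantee that each $f^{-1}(L)$ is closed with closed complement; openness of $L$ plays no further role, so the same argument in fact shows that $\sigma_L^A$ is closed whenever $L$ is clopen. This sets up the natural follow-up question, pursued later in the paper, of when $\sigma_L^A$ is moreover open, equivalently clopen.
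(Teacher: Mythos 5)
Your proof is correct and follows essentially the same route as the paper: both rely on the intersection formula of Remark~\ref{rk:synt-congr} together with continuity of the polynomial transformations, the only cosmetic difference being that the paper verifies each term is clopen by writing it as $(f\times f)^{-1}(\alpha_L)$, a preimage of the clopen relation $\alpha_L$ under the continuous map $f\times f$, whereas you verify the equal set $\alpha_{f^{-1}(L)}$ is closed by decomposing it as a union of two products of closed sets. Both arguments are sound and yield the same conclusion.
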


\begin{proof}
  Consider the equivalence relation $\alpha_L$ of
  Remark~\ref{rk:synt-congr}, so that formula~\eqref{eq:synt-congr}
  holds. Since $\alpha_L$ is a clopen subset of $A\times A$ and each
  mapping $f\in M(A)$ is continuous, so that $f\times
  f\in\cont(A\times A)$, it follows that $\sigma_L^A$ is an
  intersection of clopen sets of the form $(f\times f)^{-1}(\alpha_L)$
  and, therefore, it is closed.
\end{proof}

The following result is another simple application of
Lemma~\ref{l:synt-congr}.

\begin{Prop}
  \label{p:lift-synt-congr}
  Let $\varphi:A\to B$ be an onto homomorphism between two
  $\Omega$-algebras and let $L$ be a subset of~$B$. Then we have
  \begin{displaymath}
    (\varphi\times\varphi)^{-1}\sigma_L^B=\sigma_{\varphi^{-1}(L)}^A,
  \end{displaymath}
  so that $\varphi$ induces an isomorphism
  $A/\sigma_{\varphi^{-1}(L)}^A\to B/\sigma_L^B$.
\end{Prop}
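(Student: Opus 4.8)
The plan is to use the characterization of syntactic congruences from Lemma~\ref{l:synt-congr} together with the way the onto homomorphism $\varphi$ intertwines the translation monoids $M(A)$ and $M(B)$. Writing $L'=\varphi^{-1}(L)$ and unwinding the definition of preimage, the claimed equality reduces to the equivalence
\[
  (a,a')\in\sigma_{L'}^A
  \iff
  \bigl(\varphi(a),\varphi(a')\bigr)\in\sigma_L^B .
\]
By Lemma~\ref{l:synt-congr}, the left-hand side says that $f(a)\in L'\iff f(a')\in L'$ for all $f\in M(A)$, that is, $\varphi(f(a))\in L\iff\varphi(f(a'))\in L$; the right-hand side says that $g(\varphi(a))\in L\iff g(\varphi(a'))\in L$ for all $g\in M(B)$. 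So the whole proposition comes down to matching up these two families of membership conditions.

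The key step is an intertwining observation. A term $t$ that is linear in the distinguished variable, together with a choice of parameters, determines a translation $f\in M(A)$, and, upon applying $\varphi$ to those parameters, a translation $g\in M(B)$; since $\varphi$ is a homomorphism, these satisfy $\varphi\circ f=g\circ\varphi$. I would record this correspondence in both directions: every $f\in M(A)$ projects along $\varphi$ to some $g\in M(B)$ with $\varphi\circ f=g\circ\varphi$ (using only that $\varphi$ is a homomorphism), and conversely every $g\in M(B)$ lifts to some $f\in M(A)$ with $\varphi\circ f=g\circ\varphi$ (choosing preimages of the parameters of $g$, which is where surjectivity of $\varphi$ enters). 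With both facts available, each implication in the displayed equivalence is immediate, because the identity $\varphi\circ f=g\circ\varphi$ converts the membership condition for $f$ in $A$ into the membership condition for $g$ in $B$, and vice versa.

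The main obstacle is minor, but worth isolating: keeping track of which direction actually needs surjectivity. The inclusion $(\varphi\times\varphi)^{-1}\sigma_L^B\subseteq\sigma_{L'}^A$ uses only the projection of translations and holds for an arbitrary homomorphism, whereas the reverse inclusion relies on lifting translations and genuinely uses that $\varphi$ is onto. One should also observe that the same term $t$ is used throughout, so that linearity in the distinguished variable is preserved under both projection and lifting, keeping us inside $M(A)$ and $M(B)$ respectively rather than in the larger monoids of all polynomial transformations.

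Finally, the isomorphism statement follows by a routine application of the homomorphism theorem. The composite $A\xrightarrow{\varphi}B\to B/\sigma_L^B$ is a surjective homomorphism whose kernel congruence is exactly $\{(a,a'):(\varphi(a),\varphi(a'))\in\sigma_L^B\}=(\varphi\times\varphi)^{-1}\sigma_L^B$, which by the first part equals $\sigma_{L'}^A$. Hence it factors through an isomorphism $A/\sigma_{L'}^A\to B/\sigma_L^B$ induced by $\varphi$, as claimed.
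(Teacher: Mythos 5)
Your proof is correct, and it coincides with the paper's argument in one direction while taking a genuinely different route in the other. For the inclusion $\sigma_{\varphi^{-1}(L)}^A\subseteq(\varphi\times\varphi)^{-1}\sigma_L^B$, your argument is essentially the paper's: a translation $g\in M(B)$ is lifted to some $f\in M(A)$ by choosing $\varphi$-preimages of its parameters, which is exactly where surjectivity enters, just as you isolate. The difference is in the opposite inclusion $(\varphi\times\varphi)^{-1}\sigma_L^B\subseteq\sigma_{\varphi^{-1}(L)}^A$: the paper does not project translations at all; it invokes the correspondence theorem of universal algebra to see that $(\varphi\times\varphi)^{-1}\sigma_L^B$ is a congruence on~$A$, checks that this congruence saturates $\varphi^{-1}(L)$, and then applies the maximality characterization of Lemma~\ref{l:synt-congr} (the syntactic congruence is the largest congruence saturating the set). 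You instead use the functional characterization of Lemma~\ref{l:synt-congr} on both sides, together with the intertwining identity $\varphi\circ f=g\circ\varphi$, where $g\in M(B)$ is obtained from $f\in M(A)$ by pushing its parameters forward along~$\varphi$ (no surjectivity needed there, as you correctly note). Both arguments are sound; yours is more symmetric and self-contained, avoiding the appeal to the correspondence theorem, while the paper's version exhibits the abstract mechanism --- preimages of congruences are congruences, and they saturate preimages --- and so dispenses with any bookkeeping of terms in that direction. A further small merit of your write-up is that you spell out, via the homomorphism theorem, why the equality of congruences yields the induced isomorphism $A/\sigma_{\varphi^{-1}(L)}^A\to B/\sigma_L^B$, a step the paper leaves implicit.
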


\begin{proof}
  By the general correspondence theorem of universal algebra
  \cite[Theorem 6.20]{Burris&Sankappanavar:1981},
  $(\varphi\times\varphi)^{-1}\sigma_L^B$ is a congruence on~$A$. It
  saturates $\varphi^{-1}(L)$ since, if
  $(a,a')\in(\varphi\times\varphi)^{-1}\sigma_L^B$ and
  $a\in\varphi^{-1}(L)$, then $(\varphi(a),\varphi(a'))\in\sigma_L^B$
  and $\varphi(a)$ belongs to~$L$ and, therefore, so does
  $\varphi(a')$. This shows that
  $(\varphi\times\varphi)^{-1}\sigma_L^B\subseteq\sigma_{\varphi^{-1}(L)}^A$.

  Conversely, suppose that $(a,a')\in \sigma_{\varphi^{-1}(L)}^A$ and
  $t(x_1,\ldots,x_m)$ is a term linear in $x_1$ such that
  $t_B(\varphi(a),b_2,\ldots,b_m)\in L$ with the $b_j$ in $B$. For
  each $j\in\{2,\ldots,m\}$, let $a_j\in A$ be such that
  $b_j=\varphi(a_j)$. Then, we must have
  $t_A(a,a_2,\ldots,a_m)\in\varphi^{-1}(L)$, which yields
  $t_A(a',a_2,\ldots,a_m)\in\varphi^{-1}(L)$, which in turn entails
  $t_B(\varphi(a'),b_2,\ldots,b_m)\in L$. This establishes the reverse
  inclusion
  $\sigma_{\varphi^{-1}(L)}^A\subseteq(\varphi\times\varphi)^{-1}\sigma_L^B$.
\end{proof}

\section{Profiniteness}
\label{sec:profiniteness}

Since ``Stone algebra'' already has a different meaning in the
literature, we call a \emph{Stone topological algebra} a topological
algebra whose underlying topological space is a Stone space.

To give an application of Proposition~\ref{p:lift-synt-congr}, we
first need some connections of syntactic congruences with
profiniteness. Note that, as a result of the characterization of
syntactic congruences in Lemma~\ref{l:synt-congr}, the syntactic
congruence of a clopen subset of a topological algebra is always
closed.

\begin{Thm}
  \label{t:profinite-synt-congr}
  A Stone topological algebra $A$ is profinite if and only if, for
  every clopen subset $L$ of~$A$, the syntactic congruence
  $\sigma_L^A$ is clopen.
\end{Thm}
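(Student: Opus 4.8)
The plan is to prove both implications using the tools already developed, with the profinite algebra being realized as an inverse limit of finite discrete algebras.

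For the easier direction, suppose $A$ is profinite. Then $A$ is the inverse limit of an inverse system of finite discrete quotients $\varphi_i\colon A\to A_i$, and the clopen subsets of $A$ are exactly the sets of the form $\varphi_i^{-1}(L_i)$ for some $i$ and some subset $L_i\subseteq A_i$. Given a clopen $L$, fix such an $i$ and $L_i$ with $L=\varphi_i^{-1}(L_i)$. Since $A_i$ is finite, its translation monoid is finite, so $\sigma_{L_i}^{A_i}$ is an intersection of finitely many preimages of the clopen $\alpha_{L_i}$ under continuous maps $f\times f$; being a finite intersection of clopen sets in the discrete-hence-finite space $A_i\times A_i$, it is automatically clopen. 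Now I would apply Proposition~\ref{p:lift-synt-congr} to the onto continuous homomorphism $\varphi_i$ to conclude
\begin{displaymath}
  \sigma_L^A=\sigma_{\varphi_i^{-1}(L_i)}^A=(\varphi_i\times\varphi_i)^{-1}\sigma_{L_i}^{A_i}.
\end{displaymath}
As $\varphi_i\times\varphi_i$ is continuous and $\sigma_{L_i}^{A_i}$ is clopen in $A_i\times A_i$, the preimage $\sigma_L^A$ is clopen, as required.

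For the converse, assume that $\sigma_L^A$ is clopen for every clopen $L$. Since $A$ is a Stone space it is automatically compact, so it suffices to show that $A$ is residually finite. Given distinct points $a,a'\in A$, 0-dimensionality provides a clopen set $L$ containing $a$ but not $a'$. By hypothesis $\sigma_L^A$ is a clopen, hence open, congruence, so by the observations at the start of Section~\ref{sec:top-algebras} its classes are clopen and the quotient $A/\sigma_L^A$ is a compact Stone space; an open congruence on a compact space has finitely many classes, so $A/\sigma_L^A$ is finite. The quotient map $A\to A/\sigma_L^A$ is a continuous homomorphism onto a finite discrete algebra, and because $\sigma_L^A$ saturates $L$ while $L$ separates $a$ from $a'$, the identity transformation witnesses $(a,a')\notin\sigma_L^A$, so $a$ and $a'$ have distinct images. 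This exhibits $A$ as residually finite, completing the proof.

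The main obstacle I anticipate is the converse direction, specifically the step asserting that an open congruence on a compact space has only finitely many classes: the classes form an open partition of the compact space $A$, and since they are disjoint clopen sets covering $A$, compactness forces finiteness. I would want to state this carefully, perhaps invoking Proposition~\ref{p:quotient-Hausdorff} to see that $A/\sigma_L^A$ is compact together with the fact that an open congruence makes the quotient discrete, so that a compact discrete space is finite. The forward direction is comparatively routine once the correct identification of clopen sets with preimages from finite quotients is in hand, and the key leverage throughout is Proposition~\ref{p:lift-synt-congr}, which transfers syntactic congruences cleanly across the defining homomorphisms of the inverse limit.
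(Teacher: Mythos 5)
Your proof is correct, and its skeleton matches the paper's: both directions hinge on factoring a clopen set through a finite continuous quotient. The backward direction is essentially the paper's argument verbatim, except that you make explicit the step the paper leaves implicit, namely that a clopen congruence on a compact space has finitely many classes (disjoint clopen classes cover $A$, so compactness forces finiteness) -- a worthwhile clarification. The forward direction, however, routes through different machinery. The paper cites an external lemma to produce a continuous homomorphism $\varphi$ onto a finite algebra with $L=\varphi^{-1}(\varphi(L))$, and then concludes via the \emph{maximality} clause of Lemma~\ref{l:synt-congr}: the kernel of~$\varphi$ is a clopen congruence saturating~$L$, hence contained in $\sigma_L^A$, so the $\sigma_L^A$-classes are unions of clopen kernel classes. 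You instead take the inverse-limit realization $A=\varprojlim A_i$ with onto projections, write $L=\varphi_i^{-1}(L_i)$, and pull back via Proposition~\ref{p:lift-synt-congr} to get $\sigma_L^A=(\varphi_i\times\varphi_i)^{-1}\sigma_{L_i}^{A_i}$, which is clopen as a continuous preimage of a subset of a finite discrete square. The two mechanisms are close in spirit -- your unproved assertion that every clopen subset of the inverse limit is of the form $\varphi_i^{-1}(L_i)$ is exactly the content of the lemma the paper cites (it requires directedness of the system plus compactness, so it deserves at least a remark) -- but your version buys uniformity: it reuses Proposition~\ref{p:lift-synt-congr} in precisely the way the paper itself later does when proving Theorem~\ref{t:quotient-profinite}, whereas the paper's argument for the present theorem works directly from residual finiteness and needs no inverse-limit structure. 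As a small stylistic point, your detour through the finiteness of $M(A_i)$ is unnecessary: every subset of the finite discrete space $A_i\times A_i$ is clopen, so $\sigma_{L_i}^{A_i}$ is clopen for free.
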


\begin{proof}
  ($\Rightarrow$) It follows from residual finiteness and compactness
  that there is a continuous homomorphism $\varphi:A\to B$ onto a
  finite algebra $B$ such that $L=\varphi^{-1}(\varphi(L))$
  \cite[Lemma~4.1]{Almeida:2002a}.
  The kernel\footnote{By the \emph{kernel} of a mapping $\varphi:S\to
    T$ we mean the equivalence relation on~$S$ given by %
    $\{(s_1,s_2)\in S^2:\varphi(s_1)=\varphi(s_2)\}$.} of $\varphi$ is
  thus a clopen congruence on~$A$ that saturates~$L$. By
  Lemma~\ref{l:synt-congr}, $\sigma_L^A$ contains the kernel
  of~$\varphi$. Hence, the classes of $\sigma_L^A$ are also clopen, as
  they are unions of classes of the kernel of~$\varphi$.

  ($\Leftarrow$) We need to show that $A$ is residually finite. Given
  distinct points $a,a'\in A$, since $A$ is a Stone space, there is a
  clopen subset $L\subseteq A$ such that $a\in L$ and $a'\notin L$.
  Since $\sigma_L^A$ is clopen, the canonical homomorphism $A\to
  A/\sigma_L^A$ is a continuous homomorphism onto a finite (discrete)
  algebra that separates the points $a$ and $a'$, as $\sigma_L^A$
  saturates $L$ by Lemma~\ref{l:synt-congr}.
\end{proof}

We are now ready to prove the following result. The special case where
the signature is discrete was first proved
in~\cite[Theorem~4.3]{Gehrke:2016a} using duality theory.

\begin{Thm}
  \label{t:quotient-profinite}
  Let $A$ be a profinite algebra and let $\theta$ be a closed
  congruence on~$A$ such that $A/\theta$ is 0-dimensional. Then the
  quotient $A/\theta$ is profinite.
\end{Thm}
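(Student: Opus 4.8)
The plan is to reduce profiniteness of $A/\theta$ to the criterion of Theorem~\ref{t:profinite-synt-congr}: since $A/\theta$ is by hypothesis a compact $0$-dimensional space (compactness follows from Proposition~\ref{p:quotient-Hausdorff} because $\theta$ is closed, and $0$-dimensionality is assumed), it suffices to show that for every clopen subset $L$ of $A/\theta$, the syntactic congruence $\sigma_L^{A/\theta}$ is clopen. Let $\pi\colon A\to A/\theta$ denote the quotient homomorphism, which is continuous and onto. The key idea is to pull $L$ back to $A$, use profiniteness of $A$ there, and then push the resulting congruence back down.

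First I would set $K=\pi^{-1}(L)$, a clopen subset of $A$ since $\pi$ is continuous and $L$ is clopen. Because $A$ is profinite, Theorem~\ref{t:profinite-synt-congr} gives that $\sigma_K^A$ is clopen. Now I would invoke Proposition~\ref{p:lift-synt-congr} with the onto homomorphism $\pi$ and the subset $L$ of $A/\theta$: it yields
\begin{displaymath}
  (\pi\times\pi)^{-1}\sigma_L^{A/\theta}=\sigma_{\pi^{-1}(L)}^A=\sigma_K^A,
\end{displaymath}
and moreover that $\pi$ induces an isomorphism $A/\sigma_K^A\to (A/\theta)/\sigma_L^{A/\theta}$. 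This is the crux: the syntactic congruence downstairs pulls back exactly to the clopen syntactic congruence $\sigma_K^A$ upstairs.

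It remains to transfer clopenness across this pullback, and this is the step I expect to require the most care. The relation $\sigma_L^{A/\theta}$ is a subset of $(A/\theta)\times(A/\theta)$, while we control $(\pi\times\pi)^{-1}\sigma_L^{A/\theta}=\sigma_K^A$, which is clopen in $A\times A$. The map $\pi\times\pi\colon A\times A\to (A/\theta)\times(A/\theta)$ is continuous, surjective, and a closed map (being a continuous surjection between compact Hausdorff spaces, noting $(A/\theta)\times(A/\theta)$ is Hausdorff because $\theta$ closed makes $A/\theta$ compact Hausdorff). Since $\sigma_K^A$ is saturated by $\theta\times\theta$ — indeed it is the full preimage $(\pi\times\pi)^{-1}\sigma_L^{A/\theta}$ — its image and complement-image behave well: because $\pi\times\pi$ is a closed surjection, a subset $S$ of the codomain is closed if and only if its preimage is closed, and likewise for open sets via the complement. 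Concretely, $\sigma_L^{A/\theta}$ is closed in $(A/\theta)^2$ iff $\sigma_K^A$ is closed in $A^2$, and its complement is closed iff the complement of $\sigma_K^A$ is closed; since $\sigma_K^A$ is clopen, both hold, so $\sigma_L^{A/\theta}$ is clopen.

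Thus every clopen $L\subseteq A/\theta$ has clopen syntactic congruence, and Theorem~\ref{t:profinite-synt-congr} applied to the Stone topological algebra $A/\theta$ concludes that $A/\theta$ is profinite. The main obstacle, as indicated, is the clean descent of clopenness through $\pi\times\pi$; the essential facts making it work are that $\pi\times\pi$ is a closed map between compact Hausdorff spaces and that $\sigma_K^A$ is fully saturated by the kernel of $\pi\times\pi$, so that openness and closedness each descend through the quotient in both directions.
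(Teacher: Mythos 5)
Your proof is correct, and it follows the paper's skeleton exactly up to the final step: both arguments reduce to the criterion of Theorem~\ref{t:profinite-synt-congr} (using Proposition~\ref{p:quotient-Hausdorff} to see that $A/\theta$ is a Stone topological algebra), pull the clopen set $L$ back to $K=\pi^{-1}(L)$, and invoke Proposition~\ref{p:lift-synt-congr} to identify $(\pi\times\pi)^{-1}\sigma_L^{A/\theta}$ with the clopen congruence $\sigma_K^A$. Where you genuinely diverge is in how clopenness descends to $\sigma_L^{A/\theta}$. The paper uses the second half of Proposition~\ref{p:lift-synt-congr}: the induced isomorphism $A/\sigma_K^A\to(A/\theta)/\sigma_L^{A/\theta}$ shows that the syntactic quotient downstairs is finite, and then clopenness follows by combining finite index with the closedness of $\sigma_L^{A/\theta}$ given by Corollary~\ref{c:synt-congr-closed} (a closed equivalence relation with finitely many classes has clopen classes, hence is clopen). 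You instead argue purely topologically: $\pi\times\pi$ is a continuous surjection from the compact space $A\times A$ onto the Hausdorff space $(A/\theta)\times(A/\theta)$, hence a closed map, and for a closed surjection a subset of the codomain whose full preimage is closed (respectively open, arguing on the complement) is itself closed (respectively open); applied to the full preimage $\sigma_K^A$, which is clopen, this gives that $\sigma_L^{A/\theta}$ is clopen. Both routes are sound. Yours needs only the set-theoretic equality in Proposition~\ref{p:lift-synt-congr} and a standard fact about closed maps, bypassing Corollary~\ref{c:synt-congr-closed} and the isomorphism clause entirely; the paper's route, at the cost of invoking those, has the side benefit of exhibiting explicitly that $(A/\theta)/\sigma_L^{A/\theta}$ is a finite algebra, which is the form in which profiniteness is ultimately witnessed.
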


\begin{proof}
  Let $B=A/\theta$ and let $\varphi:A\to B$ be the canonical
  homomorphism. By assumption and
  Proposition~\ref{p:quotient-Hausdorff}, $B$ is a Stone topological
  algebra. We apply the criterion for $B$ to be profinite of
  Theorem~\ref{t:profinite-synt-congr}. So, let $L$ be a clopen subset
  of~$B$. By Proposition~\ref{p:lift-synt-congr}, we obtain the
  equality
  \begin{displaymath}
    (\varphi\times\varphi)^{-1}\sigma_L^B=\sigma_{\varphi^{-1}(L)}^A.
  \end{displaymath}
  Since $\varphi$ is continuous, the set $\varphi^{-1}(L)$ is clopen.
  Hence, by Theorem~\ref{t:profinite-synt-congr},
  $\sigma_{\varphi^{-1}(L)}^A$ is a clopen congruence, and so
  $A/\sigma_{\varphi^{-1}(L)}^A$ is finite. By
  Proposition~\ref{p:lift-synt-congr}, the algebra $B/\sigma_L^B$ is
  finite. Since $\sigma_L^B$ is a closed congruence by
  Corollary~\ref{c:synt-congr-closed}, it follows that $\sigma_L^B$ is
  clopen.
\end{proof}

A compact space $X$ has a natural uniform structure in the sense of
\cite[Chapter~II]{Bourbaki:1998GTa} with \emph{base} consisting of the
open neighborhoods of the diagonal $\{(x,x):x\in X\}$, that is, its
\emph{entourages} are the subsets of $X\times X$ that contain such
neighborhoods. This is the only uniform structure that determines the
topology of~$X$. In case $X$ is a Stone space, one may take as basic
members the sets of the form $\bigcup_{i=1}^nL_i\times L_i$ where the
$L_i$ constitute a clopen partition of~$X$; we call the union
$\bigcup_{i=1}^nL_i\times L_i$ the \emph{entourage determined} by the
partition $L_1,\ldots,L_n$. Suppose $A$ is a profinite algebra and
$L_1,\ldots,L_n$ is a clopen partition of~$A$. By
Theorem~\ref{t:profinite-synt-congr}, the set
$\theta=\bigcap_{i=1}^n\sigma_{L_i}^A$ is a clopen congruence on~$A$
that saturates each set $L_i$. Hence, to obtain a subbase of the
uniform structure of~$A$ one may consider only the clopen partitions
defined by clopen congruences.

Let $X$ be a uniform space and let $\Cl F\subseteq\Cl C(X)$. We say
that \Cl F is \emph{equicontinuous} if for every entourage $\alpha$ of
$X$ and every $x\in X$, there is an open subset $U\subseteq X$ such
that $x\in U$ and, for every $f\in\Cl F$, $f(U)\times
f(U)\subseteq\alpha$. We say that \Cl F is \emph{uniformly
  equicontinuous} if for every entourage $\alpha$ of $X$, there is an
entourage $\beta$ of~$X$ such that, for every $f\in\Cl F$, $(f\times
f)(\beta)\subseteq\alpha$. In case $X$ is compact, the two properties
are equivalent \cite[Chapter~X, \S2.1, Corollary~2]{Bourbaki:1998GTb}.
  
Here is another characterization of profiniteness for Stone
topological algebras. It is taken from
\cite[Theorem~4.4]{Schneider&Zumbragel:2017}. The proof presented here
is basically the same as that in~\cite{Schneider&Zumbragel:2017}
although it is slightly simplified thanks to the usage of syntactic
congruences.

\begin{Thm}
  \label{t:profinite-equicontinuous}
  A Stone topological algebra $A$ is profinite if and only if
  $M(A)$ is equicontinuous.
\end{Thm}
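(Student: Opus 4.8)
The plan is to use Theorem~\ref{t:profinite-synt-congr} as the bridge between profiniteness and the equicontinuity of $M(A)$. Since we have just characterized profiniteness by the clopenness of all syntactic congruences $\sigma_L^A$, the task reduces to relating that clopenness to the equicontinuity of the family of polynomial transformations. The natural link is the reformulation in Remark~\ref{rk:synt-congr}, namely $\sigma_L^A=\bigcap_{f\in M(A)}(f\times f)^{-1}(\alpha_L)$, because the entourage-based definition of equicontinuity is precisely about controlling the sets $(f\times f)^{-1}(\alpha)$ uniformly in $f$. Throughout I would exploit that $A$ is compact, so equicontinuity and uniform equicontinuity coincide (as noted before the statement), and that the uniform structure of the Stone space $A$ has a base of entourages of the form $\alpha_{L_1,\ldots,L_n}=\bigcup_i L_i\times L_i$ coming from clopen partitions, which by the remarks preceding the theorem may be taken to be partitions into classes of clopen congruences.

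\medskip

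For the direction ($\Leftarrow$), assuming $M(A)$ is equicontinuous, I would fix a clopen subset $L$ and show $\sigma_L^A$ is clopen; by Theorem~\ref{t:profinite-synt-congr} this gives profiniteness. Take the entourage $\alpha=\alpha_L$ associated to the clopen partition $\{L,A\setminus L\}$. Uniform equicontinuity yields an entourage $\beta$ with $(f\times f)(\beta)\subseteq\alpha_L$ for every $f\in M(A)$, equivalently $\beta\subseteq(f\times f)^{-1}(\alpha_L)$ for all $f$. Hence $\beta\subseteq\bigcap_{f\in M(A)}(f\times f)^{-1}(\alpha_L)=\sigma_L^A$. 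Since $\beta$ is a neighborhood of the diagonal and $\sigma_L^A$ is an equivalence relation containing it, every $\sigma_L^A$-class contains a neighborhood of each of its points, so $\sigma_L^A$ is open; being also closed by Corollary~\ref{c:synt-congr-closed}, it is clopen. (One small point to check: that $\beta\subseteq\theta$ for a neighborhood $\beta$ of the diagonal and an equivalence relation $\theta$ forces the classes of $\theta$ to be open, which is straightforward once $\beta$ is enlarged to a basic entourage from a clopen partition refining the situation.)

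\medskip

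For the direction ($\Rightarrow$), assume $A$ is profinite and fix an entourage $\alpha$; by compactness it suffices to verify uniform equicontinuity, and I may shrink $\alpha$ to a basic entourage $\alpha_{L_1,\ldots,L_n}$ determined by a clopen partition. By Theorem~\ref{t:profinite-synt-congr} each $\sigma_{L_i}^A$ is clopen, so $\theta=\bigcap_{i=1}^n\sigma_{L_i}^A$ is a clopen congruence; take $\beta=\theta$, which is an entourage since its classes are clopen and hence it contains a neighborhood of the diagonal. The key is that $\theta$ is a \emph{congruence} refining each $\sigma_{L_i}^A$: for any $f\in M(A)$ and any $(a,a')\in\theta$, I want $(f(a),f(a'))\in\alpha_{L_1,\ldots,L_n}$, i.e.\ $f(a)$ and $f(a')$ lie in the same block $L_i$. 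Because $f$ is a polynomial transformation, it preserves the congruence $\theta$, so $(f(a),f(a'))\in\theta\subseteq\sigma_{L_i}^A$ for each $i$; and since $\sigma_{L_i}^A$ saturates $L_i$ (Lemma~\ref{l:synt-congr}), $f(a)\in L_i\iff f(a')\in L_i$, which places them in the same block. Thus $(f\times f)(\beta)\subseteq\alpha$ for all $f$, giving uniform equicontinuity.

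\medskip

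\textbf{The main obstacle} I anticipate is bookkeeping rather than a deep idea: correctly passing between the ``neighborhood of the diagonal'' formulation of entourages and the ``clopen congruence'' formulation, and making sure the chosen $\beta$ is genuinely an entourage of the Stone-space uniformity. The reduction to basic entourages $\alpha_{L_1,\ldots,L_n}$ from clopen partitions — already prepared in the paragraph preceding the statement — is exactly what removes this friction, so the real content is just the interplay of Lemma~\ref{l:synt-congr} (saturation) with the fact that polynomial transformations preserve congruences. I would keep an eye on whether equicontinuity or uniform equicontinuity is the cleaner hypothesis to run each direction with, invoking their equivalence on the compact space $A$ to phrase the argument in whichever form is most convenient.
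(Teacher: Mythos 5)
Your proof is correct and follows essentially the same route as the paper: both directions hinge on Theorem~\ref{t:profinite-synt-congr}, with the backward direction using Remark~\ref{rk:synt-congr} to get an entourage $\beta\subseteq\sigma_L^A$, and the forward direction using the clopen congruence $\theta=\bigcap_{i=1}^n\sigma_{L_i}^A$ as the entourage, exactly as prepared in the paragraph preceding the theorem. The only cosmetic differences are that you finish the backward direction with a neighborhood-of-the-diagonal argument where the paper observes that each $\sigma_L^A$-class is a union of the blocks $L_i$ (and your parenthetical should say $\beta$ is \emph{shrunk}, not enlarged, to a basic entourage), and in the forward direction you detour through congruence preservation plus saturation where the paper invokes the $M(A)$-determination of $\sigma_{L_i}^A$ from Lemma~\ref{l:synt-congr} directly.
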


\begin{proof}
  ($\Rightarrow$) Let $L_1,\ldots,L_n$ be a clopen partition of~$A$
  and let $\theta=\bigcap_{i=1}^n\sigma_{L_i}^A$. Then, by
  Lemma~\ref{l:synt-congr}, for every $f\in M(A)$, we have
  \begin{displaymath}
    (f\times f)(\theta)\subseteq\bigcup_{i=1}^nL_i\times L_i.
  \end{displaymath}

  ($\Leftarrow$) We apply the criterion of
  Theorem~\ref{t:profinite-synt-congr}. So, let $L$ be a clopen subset
  of~$A$ and consider the entourage $\alpha_L$ determined by the clopen
  partition \hbox{$L,A\setminus L$}. By (uniform) equicontinuity, there
  exists a clopen partition $L_1,\ldots,L_n$ of~$A$, determining an
  entourage $\beta$ of~$X$, such that, for every $f\in M(A)$, the
  inclusion $(f\times f)(\beta)\subseteq\alpha_L$ holds or,
  equivalently, $\beta\subseteq\bigcap_{f\in M(A)}(f\times
  f)^{-1}(\alpha_L)$. By Remark~\ref{rk:synt-congr}, we have
  \begin{displaymath}
    \bigcap_{f\in M(A)}(f\times f)^{-1}(\alpha_L)=\sigma_L^A.
  \end{displaymath}
  Hence, we have $\beta\subseteq\sigma_L^A$. It follows that each
  class of~$\sigma_L^A$ is a union of some of the $L_i$ and, therefore
  it is clopen. Hence, $\sigma_L^A$ is a clopen congruence and $A$ is
  profinite by Theorem~\ref{t:profinite-synt-congr}.
\end{proof}
  
A subset of a topological space $X$ is said to be \emph{relatively
  compact} if it is contained in a compact subset of~$X$. The
following is a reformulation of
Theorem~\ref{t:profinite-equicontinuous}. As observed
in~\cite{Schneider&Zumbragel:2017}, the equivalence between the
criteria of Theorems~\ref{t:profinite-equicontinuous}
and~\ref{t:profinite-rel-compact} is an immediate application of the
Arzelà-Ascoli theorem of functional analysis \cite[Chapter~X, §2.5,
Corollary~3]{Bourbaki:1998GTb}.

\begin{Thm}[\cite{Schneider&Zumbragel:2017}]
  \label{t:profinite-rel-compact}
  A Stone topological algebra $A$ is profinite if and only if
  $M(A)$ is relatively compact in~$\Cl C(A)$.
\end{Thm}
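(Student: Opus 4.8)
The plan is to derive this equivalence directly from Theorem~\ref{t:profinite-equicontinuous} together with the Arzel\`a--Ascoli theorem, exactly as indicated in the paragraph preceding the statement. The first thing to record is that, because the underlying space of a Stone topological algebra is compact, the compact-open topology on $\cont(A)$ used throughout coincides with the topology of uniform convergence, which is precisely the setting in which the Arzel\`a--Ascoli theorem is phrased. This lets us translate statements about relative compactness of $M(A)$ in $\cont(A)$ into the language of equicontinuity.

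Concretely, the Arzel\`a--Ascoli theorem \cite[Chapter~X, \S2.5, Corollary~3]{Bourbaki:1998GTb} asserts, for a compact space $X$ and a uniform space $Y$, that a subset $\Cl F$ of $\cont(X,Y)$ is relatively compact if and only if $\Cl F$ is equicontinuous and, for every $x\in X$, the set $\{f(x):f\in\Cl F\}$ is relatively compact in~$Y$. I would apply this with $X=Y=A$ and $\Cl F=M(A)$. The crucial simplification is that the second condition then becomes vacuous: each set $\{f(a):f\in M(A)\}$ is a subset of the compact space~$A$ and hence relatively compact. Consequently, $M(A)$ is relatively compact in $\cont(A)$ if and only if $M(A)$ is equicontinuous. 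Combining this with Theorem~\ref{t:profinite-equicontinuous}, which states that $A$ is profinite if and only if $M(A)$ is equicontinuous, yields the desired equivalence.

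The proof is therefore essentially a matter of correctly invoking the Arzel\`a--Ascoli theorem and checking that its hypotheses are met in our situation. I expect the only point genuinely requiring care to be the observation that the pointwise relative-compactness hypothesis holds automatically precisely because $A$ is compact; this is exactly what allows equicontinuity alone to govern relative compactness. Beyond that, one only needs the identification of the compact-open topology with the topology of uniform convergence for maps out of a compact space, after which no further argument is needed.
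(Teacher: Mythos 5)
Your proposal is correct and takes essentially the same approach as the paper: the paper likewise obtains Theorem~\ref{t:profinite-rel-compact} by combining Theorem~\ref{t:profinite-equicontinuous} with the Arzel\`a--Ascoli theorem, noting the equivalence of the two criteria is immediate from that theorem. Your added detail --- that the pointwise relative-compactness hypothesis of Arzel\`a--Ascoli holds automatically because $A$ itself is compact, so equicontinuity alone is equivalent to relative compactness of $M(A)$ in $\cont(A)$ --- is exactly the observation that makes the application immediate.
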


\section{Determination of syntactic congruences}
\label{sec:cdsc}

In this section, we examine several ways of describing syntactic
congruences.

\subsection{Determination by terms and functions}
\label{sec:determ-terms-funct}

We say that a syntactic congruence $\sigma_L$ of an algebra $A$ is
\emph{finitely determined by terms} if there exists a finite subset
$F$ of $T_\Omega(\{x_1,\ldots,x_m\})$ such that $\sigma_L$ consists of
all pairs $(a,a')\in A^2$ for which
\begin{equation*}
  \forall t\in F\ \forall b_2,\ldots,b_m\in A\ 
  \bigl(
  t_A(a,b_2,\ldots,b_m)\in L \iff t_A(a',b_2,\ldots,b_m)\in L
  \bigr).
\end{equation*}
We then also say that the set of terms $F$ \emph{determines}
$\sigma_L$. In Lemma~\ref{l:from-terms-to-linear-terms} below, it is
observed that if $\sigma_L$ is determined by a finite set of terms,
then it is also determined by such a set in which every term is linear
in the first variable. For instance, semigroups, monoids, rings, and
distributive lattices all have syntactic congruences determined by
finite sets of terms for all subsets, even in a uniform way in the
sense that the same finite set of terms works for all syntactic
congruences on all algebras of the chosen type.

Generalizing a result of Numakura~\cite{Numakura:1957} (for the cases
of semigroups and distributive lattices), the first author
\cite{Almeida:1989b} has shown that a sufficient condition for a Stone
topological algebra to be profinite is that its syntactic congruences
of clopen subsets are finitely determined (see also
\cite{Almeida&Weil:1994,Clark&Davey&Freese&Jackson:2004}).

There is a stronger form of determination of a syntactic congruence,
which is suggested by Lemma~\ref{l:synt-congr}. We say that the
syntactic congruence $\sigma_L$ of a subset $L$ of an algebra $A$ is
\emph{$S$-determined} if $S$ is a set of functions $A\to A$ such that,
for all $a,a'\in A$, $(a,a')\in\sigma_L$ holds if and only if
\begin{displaymath}
  \forall f\in S\ \bigl(f(a)\in L \iff f(a')\in L\bigr).
\end{displaymath}
For instance, $\sigma_L$ is $M(A)$-determined by
Lemma~\ref{l:synt-congr}. As in Remark~\ref{rk:synt-congr}, note that
$\sigma_L$ is $S$-determined if and only if
\begin{displaymath}
  \sigma_L = \bigcap_{f\in
    S}(f\times f)^{-1}(\alpha_L) = \bigcap_{f\in S}\alpha_{f^{-1}(L)}.
\end{displaymath}

\begin{Prop}
  \label{p:finite-determined}
  Let $A$ be an algebra and $L$ a subset of $A$. If the syntactic
  congruence $\sigma_L$ has finite index then it is $F$-determined for
  some finite subset $F$ of~$M(A)$.
\end{Prop}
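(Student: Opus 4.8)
The plan is to exploit the fact that $\sigma_L$ is $M(A)$-determined (Lemma~\ref{l:synt-congr}) and to extract, from the full translation monoid $M(A)$, a finite subset that already distinguishes the finitely many $\sigma_L$-classes. Write $\sigma=\sigma_L$ and suppose $\sigma$ has finite index, say with classes $C_1,\ldots,C_k$. The key observation is that being $S$-determined means precisely that $\sigma=\bigcap_{f\in S}\alpha_{f^{-1}(L)}$; so what I need is a finite $F\subseteq M(A)$ for which this intersection already collapses to $\sigma$. Since $M(A)$-determinacy gives $\sigma=\bigcap_{f\in M(A)}\alpha_{f^{-1}(L)}$ exactly, the finite subset $F$ will suffice as soon as, for every pair of distinct classes $C_i,C_j$, some $f\in F$ separates them in the sense that $f^{-1}(L)$ contains one but not the other.

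First I would make the separation requirement precise. For each pair $i\ne j$, pick representatives $a\in C_i$, $a'\in C_j$. Because $(a,a')\notin\sigma$ and $\sigma$ is $M(A)$-determined, there exists $f_{i,j}\in M(A)$ with $f_{i,j}(a)\in L$ and $f_{i,j}(a')\notin L$ (or vice versa); equivalently $(a,a')\notin\alpha_{f_{i,j}^{-1}(L)}$. Here I should note that the choice of $f_{i,j}$ does not depend on the representatives: since $\sigma$ is a congruence and $f_{i,j}\in M(A)$ respects it, whether $f_{i,j}(b)\in L$ holds is constant as $b$ ranges over a single $\sigma$-class (this is exactly the saturation property of $\sigma$ with respect to $f_{i,j}^{-1}(L)$, which follows because $f_{i,j}$ preserves $\sigma$ and $\sigma$ saturates $L$). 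So $f_{i,j}$ separates the whole class $C_i$ from the whole class $C_j$.

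Next I would set $F=\{f_{i,j}:1\le i<j\le k\}$, a finite subset of $M(A)$, and verify $\sigma=\bigcap_{f\in F}\alpha_{f^{-1}(L)}$. The inclusion $\sigma\subseteq\bigcap_{f\in F}\alpha_{f^{-1}(L)}$ is immediate since each $f\in F\subseteq M(A)$ and $\sigma\subseteq\alpha_{f^{-1}(L)}$ for every $f\in M(A)$. For the reverse inclusion, take $(a,a')$ in the intersection; if they lay in distinct classes $C_i,C_j$, the chosen $f_{i,j}$ would give $(a,a')\notin\alpha_{f_{i,j}^{-1}(L)}$, contradicting membership in the intersection. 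Hence $a$ and $a'$ lie in the same $\sigma$-class, i.e.\ $(a,a')\in\sigma$. This shows $\sigma$ is $F$-determined, and since $F\subseteq M(A)$ is finite, $\sigma$ is $F$-determined by a finite subset of $M(A)$, as required.

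The only subtle point—the part I would treat most carefully rather than the routine finite intersection argument—is the claim that separation of representatives upgrades to separation of entire classes, so that a single $f_{i,j}$ handles all pairs drawn from $C_i\times C_j$ simultaneously. This is what keeps $F$ finite (one function per unordered pair of classes) rather than requiring one function per pair of points. It rests squarely on $\sigma$ being a \emph{congruence} saturating $L$: for $f\in M(A)$, the set $f^{-1}(L)$ is saturated by $\sigma$ because $f$ preserves $\sigma$-equivalence and $\sigma$ saturates $L$, so $\alpha_{f^{-1}(L)}\supseteq\sigma$ and membership $f(b)\in L$ is indeed a class invariant. Everything else is bookkeeping over the $\binom{k}{2}$ pairs of classes.
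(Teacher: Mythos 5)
Your proof is correct, but it proceeds differently from the paper's. The paper passes to the finite quotient $A/\sigma_L$, notes that its translation monoid $M(A/\sigma_L)$ is finite, lifts each of its elements $f$ to some $\hat f\in M(A)$ satisfying $\eta\circ\hat f=f\circ\eta$ (where $\eta$ is the syntactic homomorphism), and takes $F$ to be the set of these lifts; the verification that this $F$ determines $\sigma_L$ then leans on Proposition~\ref{p:lift-synt-congr}. You instead run a separation argument entirely inside $A$: for each of the $\binom{k}{2}$ pairs of distinct $\sigma_L$-classes you extract, from the $M(A)$-determinacy given by Lemma~\ref{l:synt-congr}, one polynomial transformation $f_{i,j}$ whose preimage $f_{i,j}^{-1}(L)$ separates the two classes, and you correctly isolate the crucial point that makes this work with one function per pair of classes rather than per pair of points: $f^{-1}(L)$ is $\sigma_L$-saturated for every $f\in M(A)$ (equivalently, $\sigma_L\subseteq\alpha_{f^{-1}(L)}$), so separation of representatives is automatically separation of whole classes. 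Your route is more elementary — it needs neither the lifting construction nor Proposition~\ref{p:lift-synt-congr} — and yields the sharper cardinality bound $|F|\le\binom{k}{2}$ in terms of the index $k$, versus the bound $|M(A/\sigma_L)|$ implicit in the paper's proof; what the paper's construction buys in exchange is a more structured set $F$ (a transversal of the whole translation monoid of the quotient, compatible with $\eta$), which is the form reused later in the paper's discussion of minimal determining sets. One trivial edge case you leave implicit: if $\sigma_L$ has a single class, your $F$ is empty, and the empty intersection convention makes $\emptyset$-determinacy coincide with $\sigma_L$ being universal, so the argument still goes through.
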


\begin{proof}
  Consider the syntactic homomorphism $\eta:A\to A/\sigma_L$, which
  sends each $a\in A$ to its syntactic class $a/\sigma_L$. By
  assumption, the algebra $A/\sigma_L$ is finite. Hence,
  $M(A/\sigma_L)$ is a finite monoid. For each $f\in M(A/\sigma_L)$,
  we may choose a term %
  $t\in T_\Omega(\{x_1,\ldots,x_{k+1}\})$ which is linear in $x_1$ and
  elements $a_2,\ldots,a_{k+1}\in A$ such that
  $f(x/\sigma_L)=t_A(x,a_2,\ldots,a_{k+1})/\sigma_L$ for every $x\in
  A$. We let $\hat f(x)=t_A(x,a_2,\ldots,a_{k+1})$ for each $x\in A$,
  which defines an element $\hat f$ of~$M(A)$ such that $\eta\circ\hat
  f=f\circ\eta$. Let $F=\{\hat f:f\in M(A/\sigma_L)\}$. We claim that
  $\sigma_L$ is $F$-determined.

  Since $F\subseteq M(A)$, we have %
  $\sigma_L\subseteq\bigcap_{g\in F}(g\times g)^{-1}(\alpha_L)$.
  Conversely, suppose $a,a'\in A$ are such that, for every $f\in
  M(A/\sigma_L)$, we have $\hat f(a)\in L$ if and only if $\hat
  f(a')\in L$. Since $\eta$ saturates $L$, $x\in L$ is equivalent to
  $\eta(x)\in\eta(L)$. Thus, we get $f(\eta(a))\in\eta(L)$ if and only
  if $f(\eta(a'))\in\eta(L)$ for every $f\in M(A/\sigma_L)$. By
  definition of the syntactic congruence, we deduce that
  $(\eta(a),\eta(a'))\in\sigma_{\eta(L)}$, that is,
  $(a,a')\in(\eta\times\eta)^{-1}(\sigma_{\eta(L)})$. By
  Proposition~\ref{p:lift-synt-congr}, it follows that
  $(a,a')\in\sigma_L$, which establishes the claim.
\end{proof}

Note that Proposition~\ref{p:finite-determined} applies in particular
to clopen syntactic congruences $\sigma_L$ on a compact algebra $A$,
which are necessarily determined by a clopen subset $L$ of~$A$. One
may ask whether the same finite subset $F$ of $M(A)$ may be used to
determine all syntactic congruences of clopen subsets $L$ of~$A$. This
is trivially the case for finite algebras but here is an infinite
example for which such a finite set $F$ also exists.

\begin{eg}
  \label{eg:globally-finite-determined}
  Let $A$ be a Stone space and consider a constant binary operation
  on~$A$. This turns $A$ into a profinite semigroup. Note that, for
  every $L\subseteq A$, we have $\sigma_L=\alpha_L$. Thus, all
  syntactic congruences are $F$-determined for every subset $F$
  of~$M(A)$. The same conclusion would be reached if we took the empty
  signature instead.\qed
\end{eg}

The next result shows that such an example cannot be simultaneously
finitely generated and infinite.

\begin{Prop}
  \label{p:fg-non-globally-finite-determined}
  If $A$ is an infinite finitely generated profinite algebra then, for
  every finite subset $F$ of~$M(A)$, there exists a clopen subset $L$
  of~$A$ such that $\sigma_L$ is not $F$-determined.
\end{Prop}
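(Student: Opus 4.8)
The plan is to argue by contradiction: suppose there is a fixed finite set $F = \{f_1, \ldots, f_k\} \subseteq M(A)$ that $F$-determines $\sigma_L$ for every clopen $L \subseteq A$. The key observation is that $F$-determination of all $\sigma_L$ would force a uniform bound on the index of these syntactic congruences, which should collide with the fact that a finitely generated infinite profinite algebra admits clopen subsets whose syntactic congruences have arbitrarily large (finite) index. First I would make the uniform bound precise. If $\sigma_L = \bigcap_{i=1}^k (f_i \times f_i)^{-1}(\alpha_L)$, then the $\sigma_L$-class of a point $a$ is determined entirely by the vector $(\llbracket f_1(a) \in L \rrbracket, \ldots, \llbracket f_k(a) \in L \rrbracket) \in \{0,1\}^k$; hence $\sigma_L$ has at most $2^k$ classes, a bound independent of $L$.

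Next I would produce, for this fixed $k$, a clopen subset $L$ whose syntactic congruence has more than $2^k$ classes, yielding the contradiction. Here I would exploit that $A$ is profinite, infinite, and $X$-generated for some finite set $X$. Being an infinite inverse limit of finite algebras, $A$ surjects via continuous homomorphisms onto finite quotients $A/\rho$ of unbounded size; choosing $\rho$ a clopen congruence with $|A/\rho| > 2^k$, pick a point $c \in A$ whose $\rho$-class is a single $\sigma$-class candidate and set $L$ to be a clopen set (for instance a $\rho$-class, or a union of $\rho$-classes) engineered so that the translations in $M(A)$ can separate more than $2^k$ points relative to $L$. The cleanest route is to choose $L$ so that $\sigma_L \subseteq \rho$ refines a congruence of index exceeding $2^k$; since the canonical map $A \to A/\sigma_L$ must then have image of size $> 2^k$, no $k$-element $F$ can determine it.

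The main obstacle will be guaranteeing that such an $L$ actually has $\sigma_L$ of large index, rather than collapsing. This is where finite generation is essential and where Example~\ref{eg:globally-finite-determined} shows the hypothesis cannot be dropped: in that example the operation is constant, so translations carry no information and every $\sigma_L = \alpha_L$ has only two classes. To get large index I would use that in a finitely generated profinite algebra $A$, the finite quotients are themselves finitely generated, and one can select $L$ to be (the preimage of) a subset of a large finite quotient $B = A/\rho$ whose syntactic congruence in $B$ already has index greater than $2^k$; by Proposition~\ref{p:lift-synt-congr}, $\sigma_L^A$ then also has index greater than $2^k$. The existence of such $L$ in arbitrarily large finite quotients is the crux: one must check that a finite algebra generated by a finite set $X$ admits, once it is large enough, a subset whose syntactic congruence is the whole discrete relation (or at least of index exceeding any prescribed bound). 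For a finitely generated finite algebra the syntactic homomorphism onto $B/\sigma_L^B$ is itself $X$-generated, and choosing $L$ to separate a generating configuration forces the index up; combining this with the uniform bound $2^k$ from the first step completes the contradiction.
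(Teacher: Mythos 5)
Your first step is correct and is exactly the paper's opening move: if a $k$-element set $F\subseteq M(A)$ determines $\sigma_L$, then $\sigma_L=\bigcap_{f\in F}\alpha_{f^{-1}(L)}$ has at most $2^{k}$ classes. After that you diverge from the paper, which never constructs a single ``bad'' clopen set: it argues instead that, $A$ being generated by a finite set $X$, there are only finitely many congruences on~$A$ arising as kernels of continuous homomorphisms onto algebras with at most $2^{k}$ elements, hence only finitely many syntactic congruences of clopen sets; since these separate the points of the Stone space~$A$ (each $\sigma_L$ saturates~$L$), the intersection of this finite family of finite-index congruences is the equality relation and $A$ would be finite, a contradiction. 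Your plan instead funnels everything into what you yourself call the crux: that a sufficiently large finite $X$-generated algebra $B=A/\rho$ must contain a subset $L$ with $\sigma_L^B$ of index exceeding $2^{k}$. (The surrounding steps are fine: an infinite profinite algebra does have finite quotients of unbounded size, and pulling $L$ back along $A\to B$ via Proposition~\ref{p:lift-synt-congr} preserves the index.) The crux itself, however, is left unproved: ``choosing $L$ to separate a generating configuration forces the index up'' is a hope, not an argument, and this is a genuine gap.

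The gap is not innocuous, because in the generality of this paper (signatures may be infinite) the crux claim is false. Take unary symbols $u_1,u_2,\dots$ and let $B_m=\{b_0,b_1,\dots,b_m\}$ with $u_n^{B_m}$ interpreted as the constant map onto $b_{\min(n,m)}$. Then $B_m$ is generated by $b_0$ alone, yet $M(B_m)$ consists only of the identity and constant maps, so $\sigma_L^{B_m}=\alpha_L$ has at most two classes for \emph{every} $L\subseteq B_m$, no matter how large $m$ is. These $B_m$ arise as the finite quotients of the infinite one-generated profinite algebra $A=\{0\}\cup\{1/n:n\geq 1\}\cup\{2\}$ with $u_n^A$ constant equal to $1/n$; on this $A$ every clopen $\sigma_L^A$ equals $\alpha_L$ and is $\{\mathrm{id}_A\}$-determined, so a finiteness hypothesis on the signature is genuinely needed for the statement (the paper's own counting step, asserting that only finitely many algebraic structures on subsets of a finite set $S$ are homomorphic images of~$A$, uses that hypothesis silently as well). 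And once one does assume finitely many operation symbols, the only way to prove your crux is to bound the number of congruences of index at most $2^{k}$ on an $X$-generated algebra by counting homomorphisms into the finitely many algebras of size at most $2^{k}$, each such homomorphism being determined by its values on~$X$ --- which is precisely the counting argument at the heart of the paper's proof. So the detour through large finite quotients does not circumvent the key difficulty; it relocates it to the finite level and leaves it unresolved there.
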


\begin{proof}
  Suppose that $F$ is a finite subset of $M(A)$ that determines the
  syntactic congruence of every clopen subset $L$ of $A$, that is, its
  syntactic congruence $\sigma_L$ is given by the formula
  $\sigma_L=\bigcap_{f\in F}\alpha_{f^{-1}(L)}$. Note that each
  equivalence relation $\alpha_{f^{-1}(L)}$ has at most two classes.
  Hence, $\sigma_L$ has at most $2^{|F|}$ classes, that is, the
  syntactic algebra $A/\sigma_L$ has at most $2^{|F|}$ elements. Now,
  given a finite set $S$ and a finite generating set $X$ of~$A$, there
  are only finitely many functions $X\to S$. Thus, on subsets of $S$
  there are only finitely many algebraic structures which are
  homomorphic images of~$A$. There are only finitely many congruences
  that are kernels of these homomorphisms from $A$ to~$S$. In
  particular, there are only finitely many syntactic congruences
  $\sigma_L$ of clopen subsets $L$ of~$A$. Since the corresponding
  syntactic homomorphisms $A\to A/\sigma_L$ suffice to separate the
  points of~$A$, we conclude that $A$ is finite, in contradiction with
  the hypothesis.
\end{proof}

\subsection{Compact determination}
\label{sec:comp-determ}

Suppose that $A$ is a topological algebra and $L\subseteq A$. We say
that the syntactic congruence $\sigma_L$ is \emph{compactly
  determined} if $\sigma_L$ is $C$-determined for some compact subset
$C$ of~$\cont(A)$. Similarly, $\sigma_L$ is \emph{finitely determined}
if $\sigma_L$ is $F$-determined for some finite subset $F$
of~$\cont(A)$. This not to be confused with $\sigma_L$ being
determined by a finite set of terms, which is, a priori, a weaker
property in case $F\subseteq M(A)$.
Theorem~\ref{t:syntactic-clopen-in-compact-algebra} at the end of
Subsection~\ref{sec:comp-determ-versus-finite-determ-by-terms} shows
that all these properties are equivalent in case $A$ is compact and
$L\subseteq A$ is clopen.

In this subsection, among other results, we establish that, if all
syntactic congruences of clopen subsets of a Stone topological algebra
$A$ are compactly determined, then $A$ is profinite. Several of our
results are stated for locally compact algebras because they hold in
that more general setting.

Let us briefly introduce a convenient notation for partial evaluation.
Given $f:Y\times Z\to X$, we define $f^\sharp:Z\to X^Y$ by:
\begin{equation*}
    f^\sharp(z)(y) = f(y,z).
\end{equation*}
The following proposition regroups a few useful properties of the
compact-open topology. For proofs, we refer the reader to
\cite[Chapter~X, \S3.4, Theorem~3 and Proposition~9]{Bourbaki:1998GTb}.

\begin{Prop}
  \label{p:compact-open}
  Let $X$, $Y$ and $Z$ be topological spaces, with $Y$ locally
  compact.
  \begin{enumerate}
  \item Composition is a continuous mapping
    $\cont(Y,X)\times\cont(Z,Y)\to \cont(Z,X)$.
  \item Evaluation is a continuous mapping $Y\times \cont(Y,X)\to X$.
  \item If $f\in\cont(Y\times Z, X)$, then $f^\sharp\in\cont(Z,
    \cont(Y,X))$.
  \end{enumerate}
\end{Prop}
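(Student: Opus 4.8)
The plan is to prove all three assertions directly from the definition of the compact-open topology, using the subbasic sets $[K,U]=\bigl\{f:f(K)\subseteq U\bigr\}$, and to isolate a single point-set lemma about locally compact Hausdorff spaces as the common engine. I would begin with item (2), the continuity of evaluation $e\colon Y\times\cont(Y,X)\to X$, $(y,f)\mapsto f(y)$, since it is the most elementary. Fixing $(y_0,f_0)$ and an open set $U\ni f_0(y_0)$, continuity of $f_0$ makes $f_0^{-1}(U)$ an open neighborhood of $y_0$; local compactness then supplies a compact neighborhood $K$ of $y_0$ with $K\subseteq f_0^{-1}(U)$. Writing $V$ for the interior of $K$, the set $V\times[K,U]$ is a neighborhood of $(y_0,f_0)$ that $e$ sends into $U$, because $(y,f)\in V\times[K,U]$ forces $y\in K$ and hence $f(y)\in f(K)\subseteq U$.

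For item (1), the continuity of composition $c\colon\cont(Y,X)\times\cont(Z,Y)\to\cont(Z,X)$, $(g,h)\mapsto g\circ h$, I would fix $(g_0,h_0)$ and a subbasic neighborhood $[K,U]$ of $g_0\circ h_0$, with $K\subseteq Z$ compact and $U\subseteq X$ open, so that $g_0\bigl(h_0(K)\bigr)\subseteq U$. Then $C=h_0(K)$ is a compact subset of $Y$ lying in the open set $g_0^{-1}(U)$. The crucial step invokes a sandwich lemma: in a locally compact Hausdorff space, a compact set $C$ contained in an open set $W$ admits an open set $V$ and a compact set $K'$ with $C\subseteq V\subseteq K'\subseteq W$. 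Applied here with $W=g_0^{-1}(U)$, it yields $[K',U]\times[K,V]$ as a neighborhood of $(g_0,h_0)$, and any $(g,h)$ in it satisfies $h(K)\subseteq V\subseteq K'$, whence $g\bigl(h(K)\bigr)\subseteq g(K')\subseteq U$ and $g\circ h\in[K,U]$.

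For item (3), starting from $f\in\cont(Y\times Z,X)$, I would first observe that for each fixed $z$ the map $y\mapsto f(y,z)$ is continuous, so $f^\sharp$ is a well-defined map $Z\to\cont(Y,X)$. To see it continuous, I fix $z_0$ and a subbasic neighborhood $[K,U]$ of $f^\sharp(z_0)$; unwinding the definitions, this says $K\times\{z_0\}\subseteq f^{-1}(U)$ with $f^{-1}(U)$ open. The tube lemma, which needs only that $K$ is compact, produces an open $W\ni z_0$ with $K\times W\subseteq f^{-1}(U)$, and then $f^\sharp(W)\subseteq[K,U]$. Notably this direction of the exponential law uses no local compactness at all.

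The main obstacle, and the only genuine use of local compactness in items (1) and (2), is the sandwich lemma. I would establish it from the fact that a locally compact Hausdorff space has a base of compact neighborhoods at each point: for every $y\in C$ choose a compact neighborhood $K_y\subseteq W$, extract a finite subcover of $C$ by the interiors $\operatorname{int}(K_{y_1}),\ldots,\operatorname{int}(K_{y_m})$, and take $V=\bigcup_i\operatorname{int}(K_{y_i})$ together with the compact set $K'=\bigcup_i K_{y_i}$. Everything else reduces to bookkeeping with the subbasic sets $[K,U]$, so this lemma is where I would expect to spend the real effort.
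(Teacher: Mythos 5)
Your proof is correct, but there is nothing in the paper to compare it against line by line: the paper does not prove Proposition~\ref{p:compact-open} at all, it simply defers to Bourbaki's \emph{General Topology} (Chapter~X, \S3.4, Theorem~3 and Proposition~9), these being classical facts about the compact-open topology. What you have written is essentially the standard self-contained argument that this citation encapsulates, and all three parts check out. For~(2), the compact-neighborhood base at each point of a locally compact Hausdorff space (recall the paper's convention that local compactness includes Hausdorff) gives exactly your neighborhood $V\times[K,U]$ with $V=\operatorname{int}(K)$. For~(1), your sandwich lemma --- compact $C$ inside open $W$ admits $C\subseteq V\subseteq K'\subseteq W$ with $V$ open and $K'$ compact --- is the right interpolation device, your finite-subcover proof of it is sound, and the verification that $[K',U]\times[K,V]$ is carried into $[K,U]$ by composition is complete. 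For~(3), the tube lemma argument is correct, and checking continuity on subbasic sets $[K,U]$ suffices since preimages commute with the finite intersections that build the base. Your write-up also records two refinements that the bare statement and the citation both obscure: local compactness of $Y$ enters \emph{only} through the sandwich lemma (equivalently, the compact-neighborhood bases) in parts (1) and~(2), and part~(3) holds for arbitrary $Y$ with no local compactness hypothesis; the trade-off is that the paper's citation keeps its exposition short, while your proof makes the logical dependencies explicit and keeps the paper self-contained.
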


In particular, from Proposition~\ref{p:compact-open}(1) it follows
that, for every topological space $X$, $\cont(X)$ is a topological
monoid. In case $A$ is a topological algebra, $M(A)$ is a submonoid
of~$\cont(A)$, whence so is its closure $\overline{M(A)}$.

The following statement is an elementary observation in topology whose
proof is presented for the sake of completeness.

\begin{Prop}
  \label{p:compact-intersections}
  Let $X$ and $Y$ be two topological spaces, with $X$ locally compact,
  and let $K$ be a compact subspace of $\cont(X,Y)$.
  \begin{enumerate}
  \item The following mapping is closed:
    \begin{align*}
      \Phi_K:2^Y &\to 2^X\\
      V & \mapsto  \bigcup_{f\in K}f^{-1}(V).
    \end{align*}
  \item The following mapping is open:
    \begin{align*}
      \Psi_K:2^Y &\to 2^X\\
      U & \mapsto \bigcap_{f\in K}f^{-1}(U).
    \end{align*}
  \end{enumerate}
\end{Prop}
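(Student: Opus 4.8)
The plan is to prove part~(1) directly by exhibiting $\Phi_K(V)$ as the image of a closed set under a projection along the compact factor~$K$, and then to deduce part~(2) from part~(1) by passing to complements. The two statements are dual, so essentially all the work is in part~(1).

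First I would invoke Proposition~\ref{p:compact-open}(2): since $X$ is locally compact, the evaluation map
\[
  \mathrm{ev}\colon X\times\cont(X,Y)\to Y,\qquad (x,f)\mapsto f(x),
\]
is continuous. Given a closed set $V\subseteq Y$, the preimage $\mathrm{ev}^{-1}(V)$ is then closed in $X\times\cont(X,Y)$, and hence $C_V:=\mathrm{ev}^{-1}(V)\cap(X\times K)$ is closed in the subspace $X\times K$. By unravelling the definitions one sees that
\[
  \Phi_K(V)=\{x\in X:\exists f\in K,\ f(x)\in V\}=\pi_X(C_V),
\]
where $\pi_X\colon X\times K\to X$ is the coordinate projection. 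This is the step where local compactness of $X$ enters, through the continuity of evaluation.

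The key step is that, because $K$ is \emph{compact}, the projection $\pi_X\colon X\times K\to X$ is a closed map (the tube lemma, valid for an arbitrary space $X$). Consequently $\Phi_K(V)=\pi_X(C_V)$ is closed, which proves part~(1). For part~(2) I would simply take complements: for any $U\subseteq Y$ one has
\[
  X\setminus\Psi_K(U)=\bigcup_{f\in K}\bigl(X\setminus f^{-1}(U)\bigr)=\bigcup_{f\in K}f^{-1}(Y\setminus U)=\Phi_K(Y\setminus U).
\]
If $U$ is open then $Y\setminus U$ is closed, so part~(1) gives that $\Phi_K(Y\setminus U)$ is closed, whence its complement $\Psi_K(U)$ is open.

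The only genuinely nontrivial ingredient, and the step I expect to carry the weight of the argument, is the closedness of the projection along the compact factor~$K$; continuity of evaluation reduces everything to this fact, and the rest (the identity $\Phi_K(V)=\pi_X(C_V)$ and the complementation for part~(2)) is routine. It is worth noting that local compactness is used \emph{only} to secure the hypothesis of Proposition~\ref{p:compact-open}(2), while compactness of $K$ is what makes the projection closed.
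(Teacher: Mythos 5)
Your proof is correct, but it follows a different route than the paper's. The paper proves part~(1) directly with a net argument: it takes a net $\{x_i\}$ in $\Phi_K(V)$ converging to $x$, lifts it to a net $\{(f_i,x_i)\}$ with $f_i\in K$ and $f_i(x_i)\in V$, uses compactness of $K$ to pass to a subnet along which $f_i$ converges to some $f\in K$, and then concludes $f(x)=\lim f_i(x_i)\in V$ by continuity of evaluation (the same Proposition~\ref{p:compact-open}(2) you invoke). You instead package the role of compactness into a named general fact: you write $\Phi_K(V)=\pi_X\bigl(\mathrm{ev}^{-1}(V)\cap(X\times K)\bigr)$ and appeal to the closed-projection theorem (projection along a compact factor is closed, via the tube lemma). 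These two arguments are close cousins --- the net extraction in the paper is essentially the standard proof of the closed-projection theorem, specialized and inlined --- but your decomposition has real advantages: it isolates exactly where each hypothesis enters (local compactness of $X$ only for continuity of evaluation, compactness of $K$ only for closedness of the projection), it avoids nets and subnets entirely in favor of an open-cover argument, and it makes the proposition visibly an instance of a classical theorem. The paper's version is self-contained, needing no external closed-map lemma. Both proofs handle the duality between (1) and (2) identically, by the complementation identity $X\setminus\Psi_K(U)=\Phi_K(Y\setminus U)$; the only difference is that the paper states this reduction first and you state it last.
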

  
\begin{proof}
  Clearly, (1) and (2) imply each other by taking the complement:
  $\Phi_K(2^Y\setminus U)=2^X\setminus \Psi_K(U)$. To prove (1),
  consider a closed subset $V$ of~$Y$. Let $\{ x_i \}_{i\in I}$ be a
  net in $\Phi_K(V)$ converging to $x\in X$. By definition of
  $\Phi_K$, there exists a net $\{f_i\}_{i\in I}$ in $K$ such that,
  for all $i\in I$, $f_i(x_i)\in V$. Since $K$ is compact, we may
  assume by taking a subnet that the first component of $\{ (f_i,x_i)
  \}_{i\in I}$ converges, to say $f\in K$. Since $V$ is closed and
  evaluation is continuous by Proposition~\ref{p:compact-open}(2), we
  obtain
  \begin{equation*}
    f(x) = \lim_{i\in I}f_i(x_i) \in V.
  \end{equation*}
  It follows that $x\in\Phi_K(V)$, thus showing that $\Phi_K(V)$ is
  closed.
\end{proof}

A further result in topology that we require for later considerations
is the following.

\begin{Lemma}
  \label{l:diagonal}
  Let $X$ be a Hausdorff topological space. The diagonal mapping
  $\Delta:\cont(X)\to\cont(X\times X)$ defined by $\Delta(f) = f\times
  f$ is continuous with respect to the compact-open topologies.
\end{Lemma}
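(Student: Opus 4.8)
The plan is to show that the diagonal map $\Delta\colon \cont(X)\to\cont(X\times X)$ is continuous by verifying that the preimage under $\Delta$ of each subbasic open set of the compact-open topology on $\cont(X\times X)$ is open in $\cont(X)$. A subbasic open set has the form $[K,U]$, where $K\subseteq X\times X$ is compact and $U\subseteq X\times X$ is open. So the core task is to understand which $f\in\cont(X)$ satisfy $(f\times f)(K)\subseteq U$, and to produce, around any such $f$, a compact-open neighborhood of $f$ entirely contained in $\Delta^{-1}([K,U])$.

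First I would reduce the open set $U$ to a manageable shape. Since the compact-open topology is generated by the sets $[K,U]$, it suffices to treat subbasic sets, and I would further exploit that it is enough to check the continuity condition locally: for a fixed $f$ with $(f\times f)(K)\subseteq U$, I want to cover things by rectangular pieces. The key geometric fact is that $K$ is compact and its two projections $K_1=\pi_1(K)$ and $K_2=\pi_2(K)$ are compact subsets of $X$, with $K\subseteq K_1\times K_2$. For a point $(x,y)\in K$, we have $(f(x),f(y))\in U$, and since $U$ is open in the product, there are open sets $V_{x},W_{y}\subseteq X$ with $(f(x),f(y))\in V_x\times W_y\subseteq U$. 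Using continuity of $f$ and compactness of $K$, I would extract a finite subcover to reduce the single condition $(f\times f)(K)\subseteq U$ to finitely many conditions of the product type $f(C)\subseteq V$ and $f(D)\subseteq W$ with $C,D$ compact and $V,W$ open, each of which is exactly a subbasic open condition $[C,V]$ in $\cont(X)$.

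The main step, which I expect to be the technical heart, is this covering argument: I must arrange that the finitely many rectangles not only witness $(f(x),f(y))\in U$ pointwise but, when intersected into a single compact-open neighborhood $\bigcap_k [C_k,V_k]$ of $f$, force $(g\times g)(K)\subseteq U$ for every $g$ in that neighborhood. The subtlety is that covering $K$ by open rectangles $V_x\times W_y$ gives a cover of $K$ by their $f$-preimages $f^{-1}(V_x)\times f^{-1}(W_y)$; after passing to a finite subcover indexed by $k$, any $(u,v)\in K$ lies in some $f^{-1}(V_k)\times f^{-1}(W_k)$, and I set $C_k=\overline{f^{-1}(V_k)}\cap K_1$, $D_k=\overline{f^{-1}(W_k)}\cap K_2$ (or suitable compact pieces) so that the conditions $g(C_k)\subseteq V_k$, $g(D_k)\subseteq W_k$ propagate the inclusion from $f$ to nearby $g$. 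The care needed is to keep the $C_k,D_k$ compact while still covering $K$; compactness of $K_1,K_2$ and Hausdorffness of $X$ make this possible.

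Concluding, once such a finite intersection $N=\bigcap_k\bigl([C_k,V_k]\cap[D_k,W_k]\bigr)$ is produced with $f\in N$ and $N\subseteq\Delta^{-1}([K,U])$, I have exhibited an open neighborhood of $f$ inside $\Delta^{-1}([K,U])$, proving that this preimage is open. Since $f$ and the subbasic set $[K,U]$ were arbitrary, continuity of $\Delta$ follows. I expect the principal obstacle to be the bookkeeping in the covering step, namely ensuring that the chosen compact sets $C_k,D_k$ simultaneously cover $K$ and are compact; no single result from the excerpt does this directly, so it will rely on elementary compactness and the Hausdorff hypothesis on $X$ rather than on the properties of the compact-open topology collected in Proposition~\ref{p:compact-open}.
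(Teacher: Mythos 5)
Your overall plan---checking that $\Delta^{-1}[K,U]$ is open for each subbasic $[K,U]$ by covering $K$ with open rectangles pulled back through $f$ and turning a finite subcover into a compact-open neighborhood of $f$---is workable, but the one concrete construction you offer at what you rightly call the technical heart is wrong, and your hedge ``or suitable compact pieces'' is exactly the unfilled hole. With $C_k=\overline{f^{-1}(V_k)}\cap K_1$, continuity only gives $f(C_k)\subseteq\overline{V_k}$, not $f(C_k)\subseteq V_k$, so $f$ itself need not belong to your set $N=\bigcap_k\bigl([C_k,V_k]\cap[D_k,W_k]\bigr)$, and then $N$ is useless as a witness of openness. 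Concretely: take $X=\mathbb{R}$, $f=\mathrm{id}$, $K$ the diagonal of $[0,2]$, $U=\{(s,t):|s-t|<\tfrac12\}$, and the natural rectangles $V_t\times W_t=(t-\tfrac14,t+\tfrac14)^2$; then each $C_k=[t_k-\tfrac14,t_k+\tfrac14]\cap[0,2]$ contains an endpoint whose image lies outside $V_{t_k}$, so $f\notin[C_k,V_{t_k}]$. You also locate the difficulty in the wrong place: keeping $C_k,D_k$ compact and covering $K$ is easy (your sets do both); what is delicate is reconciling coverage with the constraints $f(C_k)\subseteq V_k$ and $f(D_k)\subseteq W_k$. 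The repair is to shrink \emph{before} taking closures: $K_1$ and $K_2$ are compact Hausdorff, hence regular, so each $(x,y)\in K$ has relatively open neighborhoods $O_x\subseteq K_1$, $O_y\subseteq K_2$ with $\closure(O_x)\subseteq f^{-1}(V_x)$ and $\closure(O_y)\subseteq f^{-1}(W_y)$; extract a finite subcover of $K$ from the rectangles $O_x\times O_y$ and set $C_k=\closure(O_{x_k})$, $D_k=\closure(O_{y_k})$. Note that this uses regularity of compact Hausdorff spaces, not merely ``elementary compactness and the Hausdorff hypothesis on $X$'': a Hausdorff space need not be regular, so no shrinking is available in $X$ at large, only inside the compact sets.

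For comparison, the paper sidesteps the covering argument entirely by citing Engelking's Lemma~3.4.6: since the open rectangles $U_1\times U_2$ form a subbase of $X\times X$, the sets $[K,U_1\times U_2]$ already form a subbase of the compact-open topology on $\cont(X\times X)$. For these the preimage is computed in one line: writing $p_1,p_2$ for the projections of $X\times X$, one has $\Delta(f)(K)\subseteq U_1\times U_2$ if and only if $f(p_i(K))\subseteq U_i$ for $i=1,2$, whence $\Delta^{-1}[K,U_1\times U_2]=[p_1(K),U_1]\cap[p_2(K),U_2]$, an intersection of two subbasic open sets of $\cont(X)$. Your argument, once repaired as above, is in effect a re-proof of the special case of Engelking's lemma that the paper invokes; it is more self-contained, but all the bookkeeping you anticipate is precisely what that citation packages away.
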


\begin{proof}
  By \cite[Lemma~3.4.6]{Engelking:1989}, the sets of the form
  $[K,U_1\times U_2]$, where $K$ ranges over all compact subsets of
  $X\times X$ and $U_1$ and $U_2$ range over all open subsets of $X$,
  constitute a subbase for the compact-open topology of $\cont(X\times
  X)$. Therefore, it suffices to show that for each compact subset
  $K\subseteq X\times X$ and open subsets $U_1, U_2\subseteq X$, the
  set $\Delta^{-1}[K,U_1\times U_2]$ is open in the compact-open
  topology of $\cont(X)$.

  Let $p_1, p_2$ be the natural projections of $X\times X$ on its
  first and second components, respectively. Note that, for $i=1,2$,
  we have $p_i\circ\Delta(f) = f\circ p_i$. It follows that:
  \begin{align*}
    \Delta(f)(K)\subseteq U_1\times U_2
    &\iff
      p_i(\Delta(f)(K))\subseteq U_i & \text{for $i=1,2$} \\
    &\iff
      f(p_i(K))\subseteq U_i & \text{for $i=1,2$}.
  \end{align*}
  This shows that $\Delta^{-1}[K,U_1\times U_2] =
  [p_1(K),U_1]\cap[p_2(K),U_2]$. But for $i=1,2$, the projection $p_i$
  is continuous, so $p_i(K)$ is a compact subset of~$X$ and
  $[p_i(K),U_i]$ is open in the compact-open topology of $\cont(X)$.
  Hence, so is $\Delta^{-1}[K,U_1\times U_2]$.
\end{proof}

The next proposition shows that, in order to be compactly determined,
the syntactic congruence of a clopen subset merely needs to be
determined by a relatively compact subset of $\cont(A)$.

\begin{Prop}
  \label{p:closure-determine}
  Let $A$ be a locally compact algebra and $L$ be a clopen subset of
  $A$. If $F\subseteq\cont(A)$ is such that $\sigma_L$ is
  $F$-determined, then $\sigma_L$ is also $\overline{F}$-determined.
\end{Prop}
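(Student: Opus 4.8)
The plan is to fix a pair and show that the set of functions witnessing its equivalence is closed, so that enlarging $F$ to its closure $\overline{F}$ cannot change the intersection. One inclusion is immediate: since $F\subseteq\overline{F}$, the intersection defining $\overline{F}$-determination runs over a larger index set, whence, using the reformulation recalled after Remark~\ref{rk:synt-congr} together with the hypothesis that $\sigma_L$ is $F$-determined,
\[
  \bigcap_{f\in\overline{F}}\alpha_{f^{-1}(L)}
  \subseteq\bigcap_{f\in F}\alpha_{f^{-1}(L)}=\sigma_L.
\]
It therefore remains only to establish the reverse inclusion $\sigma_L\subseteq\bigcap_{f\in\overline{F}}\alpha_{f^{-1}(L)}$.

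To this end, I would fix an arbitrary pair $(a,a')\in\sigma_L$ and introduce the set
\[
  G=\bigl\{g\in\cont(A):g(a)\in L\iff g(a')\in L\bigr\}.
\]
The assumption that $\sigma_L$ is $F$-determined says exactly that $F\subseteq G$. Hence, if $G$ is closed, then $\overline{F}\subseteq\overline{G}=G$, which means $f(a)\in L\iff f(a')\in L$ for every $f\in\overline{F}$; as $(a,a')\in\sigma_L$ was arbitrary, this is precisely the desired inclusion. So the whole argument reduces to proving that $G$ is closed.

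For the closedness of $G$ I would invoke continuity of evaluation. Since $A$ is locally compact, Proposition~\ref{p:compact-open}(2) gives that evaluation $A\times\cont(A)\to A$ is continuous; fixing the first coordinate then yields continuous maps $\mathrm{ev}_a,\mathrm{ev}_{a'}:\cont(A)\to A$ sending $g$ to $g(a)$ and $g(a')$ respectively. Because $L$ is clopen, the preimages $\mathrm{ev}_a^{-1}(L)$ and $\mathrm{ev}_{a'}^{-1}(L)$ are clopen subsets of $\cont(A)$, and $G$ is the Boolean combination
\[
  G=\bigl(\mathrm{ev}_a^{-1}(L)\cap\mathrm{ev}_{a'}^{-1}(L)\bigr)
  \cup\bigl((\cont(A)\setminus\mathrm{ev}_a^{-1}(L))\cap(\cont(A)\setminus\mathrm{ev}_{a'}^{-1}(L))\bigr),
\]
which is clopen, in particular closed. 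This completes the plan.

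The point I expect to require the most care is bookkeeping of the two hypotheses at this single step: local compactness of $A$ is exactly what makes evaluation-at-a-point continuous, while clopenness of $L$ is what promotes each preimage to a \emph{clopen} set and lets the biconditional set $G$ be closed (for a merely closed $L$ the ``$\iff$'' would not survive complementation). Everything else is formal, so neither assumption is used anywhere beyond guaranteeing that $G$ is closed.
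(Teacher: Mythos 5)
Your proof is correct, and it shares the paper's overall skeleton (the easy inclusion via $F\subseteq\overline{F}$, then fixing a pair in $\sigma_L$ and showing every $f\in\overline{F}$ still respects it), but the key step is implemented by a genuinely different and lighter device. The paper takes a net $\{f_i\}_{i\in I}$ in $F$ converging to $f\in\overline{F}$ and passes to the limit inside the closed set $\alpha_L$, which requires both the continuity of evaluation (Proposition~\ref{p:compact-open}(2), applied to the locally compact space $A\times A$) and the continuity of the diagonal map $\Delta(f)=f\times f$ established separately in Lemma~\ref{l:diagonal}. You instead observe that, for a fixed pair $(a,a')\in\sigma_L$, the set $G$ of functions respecting that pair is a Boolean combination of preimages of the clopen set $L$ under the point evaluations $\mathrm{ev}_a,\mathrm{ev}_{a'}$, hence clopen in $\cont(A)$; since $F$-determination gives $F\subseteq G$, closedness of $G$ forces $\overline{F}\subseteq G$ at once, with no nets and no diagonal lemma. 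Your route in fact proves slightly more than you claim: point evaluation $\mathrm{ev}_a$ is continuous on $\cont(A)$ for an \emph{arbitrary} space $A$, because $\mathrm{ev}_a^{-1}(U)=[\{a\},U]$ is a subbasic open set of the compact-open topology; so local compactness is not needed for this proposition at all, whereas your write-up routes the continuity of $\mathrm{ev}_a$ through Proposition~\ref{p:compact-open}(2) and therefore still (unnecessarily) consumes that hypothesis. Your closing remark about clopenness of $L$ is accurate and applies equally to the paper's argument: for a merely closed $L$, the relation $\alpha_L$ need not be closed, and both proofs would break down at the same point.
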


\begin{proof}
  Clearly, we have:
  \begin{equation*}
    \bigcap_{f\in\overline{F}}(f\times f)^{-1}(\alpha_L)
    \subseteq
    \bigcap_{f\in F}(f\times f)^{-1}(\alpha_L) = \sigma_L.
  \end{equation*}
  To prove the reverse inclusion, let us fix $x\in\sigma_L$ and take
  an arbitrary $f\in\overline{F}$. Let $\{ f_i \}_{i\in I}$ be a net
  in $F$ converging to~$f$. By assumption, $(f_i\times f_i)(x)$
  belongs to $\alpha_L$ for every $i\in I$. Since $\alpha_L$ is closed
  and the evaluation and diagonal mappings are continuous,
  respectively by Proposition~\ref{p:compact-open}(2) and
  Lemma~\ref{l:diagonal}, we deduce that
  \begin{equation*}
    (f\times f)(x)
    = \lim_{i\in I}(f_i\times f_i)(x)
    \in \alpha_L.
  \end{equation*}
  This shows that $x\in(f\times f)^{-1}(\alpha_L)$, as required.
\end{proof}

Combining Proposition~\ref{p:closure-determine} with
Theorem~\ref{t:profinite-rel-compact}, we get the following result.

\begin{Cor}
  \label{c:pf-compactly-determined}
  Let $A$ be a profinite algebra and $L$ be a clopen subset of $A$.
  Then $\sigma_L$ is compactly determined.
\end{Cor}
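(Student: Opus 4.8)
The plan is to combine the two cited results almost mechanically, since the corollary is stated precisely as a consequence of Proposition~\ref{p:closure-determine} and Theorem~\ref{t:profinite-rel-compact}. First I would recall that, by Lemma~\ref{l:synt-congr}, the syntactic congruence $\sigma_L$ is always $M(A)$-determined, for any subset $L$ and any algebra $A$ whatsoever; this is the free starting point that requires no hypothesis. So I already have a determining set, namely $F = M(A) \subseteq \cont(A)$, and the only task is to upgrade it to a compact determining set.

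Next I would invoke the profiniteness of $A$. Since $A$ is profinite, it is in particular a Stone topological algebra, so Theorem~\ref{t:profinite-rel-compact} applies and tells me that $M(A)$ is relatively compact in $\cont(A)$. By definition of relative compactness, this means $M(A)$ is contained in a compact subset of $\cont(A)$; equivalently, since $\cont(A)$ is Hausdorff (being a space of continuous maps into the Hausdorff space $A$, endowed with the compact-open topology), the closure $\overline{M(A)}$ is itself compact.

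Then I would bring in Proposition~\ref{p:closure-determine}. A profinite algebra is compact, hence locally compact, and $L$ is clopen by hypothesis, so the proposition applies with $F = M(A)$: from $\sigma_L$ being $M(A)$-determined I conclude that $\sigma_L$ is $\overline{M(A)}$-determined. Putting the two facts together, $\overline{M(A)}$ is a compact subset of $\cont(A)$ that determines $\sigma_L$, which is exactly the assertion that $\sigma_L$ is compactly determined.

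I do not anticipate a genuine obstacle here, as the corollary is designed to be a short synthesis of the preceding two statements; the only point demanding a moment's care is the passage from ``$M(A)$ relatively compact'' to ``$\overline{M(A)}$ compact'', which rests on the Hausdorffness of $\cont(A)$ so that the closure of a relatively compact set is compact. Once that is noted, the verification that $\overline{M(A)}$ both is compact and determines $\sigma_L$ is immediate, and no further computation is needed.
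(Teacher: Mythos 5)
Your proof is correct and is exactly the synthesis the paper intends: the paper's own (one-line) justification is precisely "combine Proposition~\ref{p:closure-determine} with Theorem~\ref{t:profinite-rel-compact}," using the fact from Lemma~\ref{l:synt-congr} that $\sigma_L$ is $M(A)$-determined. Your additional remark that relative compactness of $M(A)$ yields compactness of $\overline{M(A)}$ (via Hausdorffness of $\cont(A)$) is a valid and worthwhile point of care, but it does not change the route.
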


The following result shows the significance of the notion of compactly
determined syntactic congruence.

\begin{Thm}
  \label{t:lc-syntactic-clopen}
  Let $A$ be a locally compact algebra $A$ and $L$ a subset of~$A$.
  Then $L$ is clopen and $\sigma_L$ is compactly determined if and
  only if $\sigma_L$ is clopen.
\end{Thm}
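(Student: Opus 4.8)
The plan is to prove the two implications separately, using Proposition~\ref{p:compact-intersections} for the forward direction and an explicit construction of a compact determining family for the reverse one.

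For the forward direction, I assume $L$ is clopen and $\sigma_L$ is $C$-determined for some compact $C\subseteq\cont(A)$, so that $\sigma_L=\bigcap_{f\in C}(f\times f)^{-1}(\alpha_L)$. Since $L$ is clopen, $\alpha_L=(L\times L)\cup((A\setminus L)\times(A\setminus L))$ is a clopen, hence open, subset of $A\times A$. By Lemma~\ref{l:diagonal} the diagonal map $\Delta\colon\cont(A)\to\cont(A\times A)$ is continuous, so $\Delta(C)$ is a compact subspace of $\cont(A\times A)$. As $A\times A$ is locally compact, I would apply Proposition~\ref{p:compact-intersections}(2) with $X=Y=A\times A$ and $K=\Delta(C)$: the map $\Psi_{\Delta(C)}$ sends open sets to open sets, and $\Psi_{\Delta(C)}(\alpha_L)=\bigcap_{f\in C}(f\times f)^{-1}(\alpha_L)=\sigma_L$. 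Hence $\sigma_L$ is open; since it is also closed by Corollary~\ref{c:synt-congr-closed}, it is clopen.

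For the reverse direction, I assume $\sigma_L$ is clopen. Then its classes are clopen (Section~\ref{sec:top-algebras}), and since $\sigma_L$ saturates both $L$ and $A\setminus L$ by Lemma~\ref{l:synt-congr}, each of these sets is a union of clopen classes and therefore open; thus $L$ is clopen. It remains to produce a compact determining family. Excluding the trivial cases $L=\emptyset$ and $L=A$ (where $\sigma_L=A\times A$ is determined by the identity), I fix points $\ast\in L$ and $\diamond\in A\setminus L$. For each $\sigma_L$-class $q$, let $f_q\colon A\to A$ be the map equal to $\ast$ on $q$ and to $\diamond$ on $A\setminus q$; since $q$ is clopen, $f_q$ is continuous, and $f_q^{-1}(L)=q$, so $\alpha_{f_q^{-1}(L)}$ has classes $q$ and $A\setminus q$. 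This family already determines $\sigma_L$: the condition $f_q(a)\in L\iff f_q(a')\in L$ for all $q$ unwinds to $a\in q\iff a'\in q$ for all $q$, which holds exactly when $a$ and $a'$ share a $\sigma_L$-class.

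The main point is to make this family compact, and this is the only real obstacle. I would index the classes by $Q=A/\sigma_L$ and adjoin the constant map $f_\infty\equiv\diamond$, checking that $q\mapsto f_q$, $\infty\mapsto f_\infty$ is a continuous map from the one-point compactification $Q^+$ of the discrete set $Q$ into $\cont(A)$. Continuity at points of $Q$ is automatic; continuity at $\infty$ rests on the observation that any compact $K\subseteq A$ meets only finitely many classes, since the classes covering $K$ form an open cover of $K$ by pairwise disjoint sets. Indeed, for a subbasic neighbourhood $[K,U]$ of $f_\infty$ we have $\diamond\in U$, and $f_q(K)=\{\diamond\}\subseteq U$ for every class $q$ disjoint from $K$, hence for all but finitely many $q$; so the preimage of $[K,U]$ is cofinite in $Q^+$. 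Therefore $C=\{f_q:q\in Q\}\cup\{f_\infty\}$ is the continuous image of the compact space $Q^+$, hence compact, and since $\alpha_{f_\infty^{-1}(L)}=\alpha_\emptyset=A\times A$ contributes nothing to the intersection, $C$ determines $\sigma_L$. This shows that $\sigma_L$ is compactly determined and completes the proof. (When $A$ is compact the classes are finite in number, and one may instead invoke Proposition~\ref{p:finite-determined} directly.)
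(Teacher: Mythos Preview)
Your proof is correct. The forward implication is essentially the paper's argument; your appeal to Corollary~\ref{c:synt-congr-closed} is harmless but unnecessary, since an open equivalence relation is automatically clopen (Section~\ref{sec:top-algebras}).

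For the reverse implication the paper takes a different route. Instead of your family of class-indicator maps $\{f_q:q\in Q\}$ augmented by the constant $f_\infty$, the paper fixes two points $a,b$ lying in distinct $\sigma_L$-classes and takes the set $K$ of \emph{all} maps $A\to\{a,b\}$ that are constant on each $\sigma_L$-class; compactness of $K$ then follows at once by exhibiting it as the continuous image of the product space $\{a,b\}^{A/\sigma_L}=\cont(A/\sigma_L,\{a,b\})$ under precomposition with the quotient map $\eta:A\to A/\sigma_L$. Thus the paper replaces your one-point-compactification argument---and the check that every compact subset of $A$ meets only finitely many $\sigma_L$-classes---by a Tychonoff argument. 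Your determining set is much smaller and the verification that it determines $\sigma_L$ is more transparent; the paper's set is larger but its compactness comes for free. Neither argument for this direction uses local compactness of $A$, as the paper also remarks; you additionally make explicit the easy observation that $L$ itself is clopen, which the paper leaves implicit.
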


\begin{proof}
  Suppose first that $L$ is clopen and $\sigma_L$ is $K$-determined,
  where $K$ is a compact subset of~$\cont(A)$. Since $\sigma_L$ is
  $K$-determined, we may write $\sigma_L = \Psi_{\Delta K}(\alpha_L)$
  in the notation of Proposition~\ref{p:compact-intersections} and
  Lemma~\ref{l:diagonal}. By Lemma~\ref{l:diagonal}, $\Delta$ is
  continuous with respect to the compact-open topologies, so $\Delta
  K$ is a compact subset of $\cont(A\times A)$. By Proposition
  \ref{p:compact-intersections} applied to $Y=X\times X$, it follows
  that $\Psi_{\Delta K}$ maps open relations on $X\times X$ to open
  relations on~$X$. But since $L$ is clopen, so is $\alpha_L$, whence
  $\sigma_L$ is clopen.

  For the converse, assume that $\sigma_L$ is clopen. If $\sigma_L$ is
  the universal relation, then $L$ is either $\emptyset$ or $A$ and
  $\sigma_L$ is $K$-determined for every subset
  $K$ of~$\cont(A)$. Otherwise, we may choose elements
  $a,b\in A$ that are not $\sigma_L$-equivalent. Let $K$
  be the set of all mappings $A\to\{a,b\}$ that are constant on each
  $\sigma_L$-class. Since $\sigma_L$ is clopen, $K$ is
  contained in $\cont(A)$. Consider the mapping
  \begin{align*}
    \varphi:\cont(A/\sigma_L,\{a,b\})&\to K\\
    f&\mapsto f\circ\eta,
  \end{align*}
  where $\eta:A\to A/\sigma_L$ is the natural quotient mapping. Note
  that $A/\sigma_L$ is a discrete space under the quotient topology,
  whence it is locally compact. By
  Proposition~\ref{p:compact-open}(1), $\varphi$ is a continuous
  mapping. As both spaces $A/\sigma_L$ and $\{a,b\}$ are discrete, the
  space $\cont(A/\sigma_L,\{a,b\})$ is in fact the product space
  $\{a,b\}^{A/\sigma_L}$. Since $\varphi$ is onto and continuous, we
  deduce that $K$ is compact. The proof is achieved by observing that
  $\sigma_L$ is $K$-determined.
\end{proof}

Note that the second part of the above proof does not use the
hypothesis that the algebra $A$ is locally compact. In the locally
compact case, the proof could also be given by invoking the
Arzel\`a-Ascoli theorem.

Theorems~\ref{t:lc-syntactic-clopen} and~\ref{t:profinite-synt-congr}
yield the following result.

\begin{Cor}
  \label{c:ST-cd->profinite}
  Let $A$ be a Stone topological algebra, and suppose that for every
  clopen subset $L$ of $A$, $\sigma_L$ is compactly determined. Then
  $A$ is profinite.
\end{Cor}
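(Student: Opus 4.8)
The plan is to deduce the statement by chaining the two results cited in the corollary, namely Theorem~\ref{t:lc-syntactic-clopen} and Theorem~\ref{t:profinite-synt-congr}. The first step is a routine topological observation: by convention in this paper every Stone space is compact and Hausdorff, hence in particular locally compact, so the hypotheses of Theorem~\ref{t:lc-syntactic-clopen} are met with $A$ playing the role of the locally compact algebra. This is the only point that requires a (trivial) check, and it is what licenses the use of Theorem~\ref{t:lc-syntactic-clopen} in the present setting.

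Next I would fix an arbitrary clopen subset $L$ of~$A$. By the standing hypothesis, $\sigma_L$ is compactly determined. Thus $L$ is clopen and $\sigma_L$ is compactly determined, so the forward direction of Theorem~\ref{t:lc-syntactic-clopen} applies and yields that $\sigma_L$ is clopen. Since $L$ was an arbitrary clopen subset of~$A$, this shows that the syntactic congruence of every clopen subset of~$A$ is clopen.

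Finally, I would invoke the ($\Leftarrow$) implication of Theorem~\ref{t:profinite-synt-congr}: a Stone topological algebra whose syntactic congruences of clopen subsets are all clopen is profinite. Applying it to~$A$ gives the conclusion. I expect no genuine obstacle in this argument, as it is a direct composition of two previously established equivalences; the only subtlety worth flagging is the passage from ``Stone'' to ``locally compact'' needed to invoke Theorem~\ref{t:lc-syntactic-clopen}, together with keeping track of which direction of each biconditional is being used (the $L$ clopen and compactly determined $\Rightarrow$ $\sigma_L$ clopen half of Theorem~\ref{t:lc-syntactic-clopen}, and the $\sigma_L$ clopen $\Rightarrow$ profinite half of Theorem~\ref{t:profinite-synt-congr}).
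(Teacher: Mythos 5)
Your proof is correct and follows exactly the route the paper intends: the paper derives this corollary by combining Theorem~\ref{t:lc-syntactic-clopen} (using that a Stone space is compact, hence locally compact) with the backward implication of Theorem~\ref{t:profinite-synt-congr}, which is precisely your argument. The one subtlety you flag --- passing from ``Stone'' to ``locally compact'' --- is indeed the only check needed.
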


Given a profinite algebra $A$ and a clopen subset $L$ of $A$, one may
wonder what are the compact subsets of $\cont(A)$ determining
$\sigma_L$. The following shows that, at the very least, there is
always one that is minimal.

\begin{Prop}
  \label{p:lc-minimal}
  Let $A$ be locally compact algebra, $L$ be a clopen subset of~$A$.
  Then, every compact subset of $\cont(A)$ determining $\sigma_L$
  contains a minimal compact subset determining $\sigma_L$.
\end{Prop}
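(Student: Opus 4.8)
The plan is to apply Zorn's lemma to the poset $\mathcal{P}$ consisting of all compact subsets $K'\subseteq K$ such that $\sigma_L$ is $K'$-determined, ordered by inclusion, and to extract a minimal element. Since $K\in\mathcal{P}$ by hypothesis, $\mathcal{P}$ is nonempty; moreover, a minimal element $K'$ of $\mathcal{P}$ will automatically be minimal among \emph{all} compact subsets of $\cont(A)$ determining $\sigma_L$, because any compact determining subset strictly contained in $K'$ would again lie in $\mathcal{P}$. To invoke the dual form of Zorn's lemma it suffices to show that every nonempty chain $\{K_j\}_{j\in J}$ in $\mathcal{P}$ has a lower bound in $\mathcal{P}$; the natural candidate is the intersection $K_\infty=\bigcap_{j\in J}K_j$.

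That $K_\infty$ is compact is immediate, being an intersection of compact subsets of the Hausdorff space $\cont(A)$. Recalling that $K'$-determination means $\sigma_L=\bigcap_{f\in K'}\alpha_{f^{-1}(L)}$, the inclusion $\sigma_L\subseteq\bigcap_{f\in K_\infty}\alpha_{f^{-1}(L)}$ holds trivially, since $K_\infty$ is contained in each $K_j$ and each $K_j$ determines $\sigma_L$. The whole difficulty lies in the reverse inclusion: I must show that for every pair $(a,a')\notin\sigma_L$ there exists a single $f\in K_\infty$ with $(f(a),f(a'))\notin\alpha_L$.

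To produce such an $f$, I fix $(a,a')\notin\sigma_L$ and consider the map $e\colon\cont(A)\to A\times A$ defined by $e(f)=(f(a),f(a'))$. By Proposition~\ref{p:compact-open}(2) (this is where local compactness of $A$ enters) each coordinate of $e$ is continuous, hence so is $e$; since $L$ is clopen, $\alpha_L$ is a clopen subset of $A\times A$, and therefore $W:=e^{-1}\bigl((A\times A)\setminus\alpha_L\bigr)$ is a clopen subset of $\cont(A)$, namely the set of maps separating $a$ and $a'$ through $L$. For each $j$, set $S_j:=K_j\cap W$. Because $K_j$ determines $\sigma_L$ and $(a,a')\notin\sigma_L$, the set $S_j$ is nonempty, and being closed in the compact space $K_j$ it is compact. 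As $\{K_j\}_{j\in J}$ is a chain, so is $\{S_j\}_{j\in J}$, so this family of nonempty compact sets has the finite intersection property; by compactness its intersection $\bigcap_{j\in J}S_j=K_\infty\cap W$ is nonempty. Any $f$ in it lies in $K_\infty$ and satisfies $(f(a),f(a'))\notin\alpha_L$, as required. This proves $K_\infty\in\mathcal{P}$, completing the verification that chains have lower bounds, and Zorn's lemma then yields the desired minimal compact determining subset.

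The main obstacle is precisely this passage to the limit in the reverse inclusion: the nonemptiness of the separating sets $S_j$ must survive the intersection over the whole chain. This is not automatic from each $K_j$ determining $\sigma_L$, and it is resolved by recasting separation as membership in a fixed clopen set $W$ and invoking the finite intersection property of the nonempty compact sets $S_j=K_j\cap W$. The continuity of evaluation from Proposition~\ref{p:compact-open}(2), together with the clopenness of $\alpha_L$, is exactly what makes $W$ clopen and hence the $S_j$ compact.
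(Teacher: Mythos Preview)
Your proof is correct and follows essentially the same Zorn's-lemma strategy as the paper: show that the intersection of a descending chain of compact determining subsets again determines $\sigma_L$. The only difference is in packaging the key compactness step: the paper chooses, for each $i$, some $f_i\in F_i$ separating the fixed pair, extracts a convergent subnet, and uses continuity of evaluation together with clopenness of $\alpha_L$ to show the limit lies in $\bigcap_i F_i$ and still separates; you obtain the same conclusion more directly via the finite intersection property of the nonempty closed sets $S_j=K_j\cap W$, which is a slightly cleaner but equivalent formulation of the same argument.
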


\begin{proof}
  We apply Zorn's lemma. Let $F$ be a compact subset of
  $\cont(A)$ determining $\sigma_L$. Fix a descending chain
  $\{F_i\}_{i\in I}$ of closed subsets of $F$
  that determines $\sigma_L$ and let:
  \begin{equation*}
    F' = \bigcap_{i\in I}F_i.
  \end{equation*}
  We want to show that $F'$ determines $\sigma_L$. Fixing an arbitrary
  $i\in I$, the inclusion $F'\subseteq F_i$ gives:
  \begin{equation}
    \label{eq:lc-minimal-1}
    \bigcap_{f\in F'}(f\times f)^{-1}(\alpha_L)
    \supseteq
    \bigcap_{f\in F_i}(f\times f)^{-1}(\alpha_L) = \sigma_L.
  \end{equation}
  It remains to show the reverse inclusion. Let us suppose that $x\in
  A^2\setminus\sigma_L$. Then, we have
  \begin{equation*}
    \forall i\in I\ \exists f_i\in F_i,\
    (f_i\times f_i)(x)\notin\alpha_L.
  \end{equation*}
  This defines a net $\{ f_i \}_{i\in I}$ in $F$, which by compactness
  has a converging subnet $\{ f_{i_j} \}_{j\in J}$; let $f'\in F$
  denote its limit.
  For each $i\in I$, there is $k\in J$ such that $i_k\geq i$, and so
  the net $\{f_{i_j}\}_{j\in J}$ is eventually in $F_i$. Since $F_i$
  is closed, it follows that $f'\in F_i$, and this holds for each
  $i\in I$.
  Thus, $f'$ belongs to~$F'$. Furthermore, as the evaluation and
  diagonal mappings are continuous, again respectively by
  Proposition~\ref{p:compact-open}(2) and Lemma~\ref{l:diagonal}, and
  $\alpha_L$ is clopen, we get
  \begin{equation*}
    (f'\times f')(x) = \lim_{j\in J}(f_{i_j}\times f_{i_j})(x) \notin\alpha_L.
  \end{equation*}
  This shows that:
  \begin{equation*}
    x\in
    \bigcup_{f\in F'}(f\times f)^{-1}(A^2\setminus\alpha_L)
    = A^2\setminus
    \bigcap_{f\in F'}(f\times f)^{-1}(\alpha_L).
  \end{equation*}
  Thus, the reverse inclusion in~\eqref{eq:lc-minimal-1} is proved and
  $\sigma_L$ is $F'$-determined.
\end{proof}

In case $A$ is a compact algebra and $L$ is a clopen subset of~$A$,
one may use the ideas in the proof of
Proposition~\ref{p:finite-determined} to show that every minimal
compact subset of $\cont(A)$ determining $\sigma_L$ is finite.
Combining with Proposition~\ref{p:lc-minimal}, it follows that every
compact subset of~$\cont(A)$ determining $\sigma_L$ contains a finite
such set. The following example shows that
Proposition~\ref{p:lc-minimal} cannot be improved in the same
direction for arbitrary locally compact algebras.

\begin{eg}
  \label{eg:cd-not-fd}
  Consider the additive monoid $\mathbb{N}$ of natural numbers under
  the discrete topology. Let $L$ be an infinite subset of~$\mathbb{N}$
  containing no infinite arithmetic progression, for instance the set
  of all powers of 2 or the set of all primes. We claim that:
  \begin{enumerate}
  \item\label{item:cd-not-fd-1} $\sigma_L$ is the equality relation;
  \item\label{item:cd-not-fd-2} $\sigma_L$ is not finitely determined.
  \end{enumerate}
  As $\mathbb{N}$ is locally compact, being discrete, and $\sigma_L$
  is clopen, for the same reason, Theorem~\ref{t:lc-syntactic-clopen}
  yields that $\sigma_L$ is compactly determined. Hence,
  by~(\ref{item:cd-not-fd-2}), $\sigma_L$ is an example of a compactly
  determined syntactic congruence of a clopen subset of a locally
  compact algebra that is not finitely determined.

  To prove Claim~(\ref{item:cd-not-fd-1}), since $\mathbb{N}$ is a
  commutative monoid, a pair $(m,n)$ of natural numbers belongs to
  $\sigma_L$ if and only if
  \begin{equation}
    \label{eq:cd-not-fd-1}
    \forall x\in\mathbb{N}\ (m+x\in L \iff n+x\in L).
  \end{equation}
  Suppose that Property~\eqref{eq:cd-not-fd-1} holds with $m<n$. Since
  $L$ is infinite, there exists $k$ such that $m+k$ belongs to~$L$ and
  whence so does $n+k$. Using~\eqref{eq:cd-not-fd-1}, we deduce that
  all the elements in the arithmetic progression starting with $m+k$
  with period $n-m$ lie in~$L$, which contradicts the assumption on
  the set $L$. Hence no two distinct elements of~$\mathbb{N}$ can be
  $\sigma_L$-equivalent, thereby proving (\ref{item:cd-not-fd-1}).

  To establish Claim~(\ref{item:cd-not-fd-2}), it suffices to observe
  that the argument at the beginning of the proof of
  Proposition~\ref{p:fg-non-globally-finite-determined} shows that a
  finitely determined syntactic congruence has finite index, which is
  not the case of~$\sigma_L$ by~(\ref{item:cd-not-fd-1}).\qed
\end{eg}

\subsection{Compact determination versus finite determination by
  terms}
\label{sec:comp-determ-versus-finite-determ-by-terms}

We next show that, for compact algebras, a syntactic congruence is
compactly determined if and only if it is determined by a finite set
of terms, in the sense introduced at the beginning of
Subsection~\ref{sec:determ-terms-funct}.

We start with some notation. Let $A$ be an algebra, $F$ be a subset of
$\cont(A)$, and $S$ be a subset of $\cont(A^{k+1},A)$
Given $s\in S$,
in view of Proposition~\ref{p:compact-open}, we obtain a function
$s^\sharp\in\cont(A^k,\cont(A))$. We define:
\begin{equation*}
  F_S
  = \{ f\in F: \exists s\in S\ \exists v\in A^k, f = s^\sharp(v) \}.
\end{equation*}
We also abbreviate $F_{\{s\}}$ by $F_{s}$. Note that
\begin{equation*}
  F_S
  =\bigcup_{s\in S} F_s
  =F\cap\left(\bigcup_{s\in S}s^\sharp(A^k)\right)
  \subseteq \cont(A).
\end{equation*}
In case $T$ is subset of~$T_\Omega(\{x_1,\dots,x_{k+1}\})$, we also
write $F_T$ for $F_S$, where $S=\{t_A: t\in T\}$.

\begin{Lemma}
  \label{l:from-terms-to-linear-terms}
  Let $L$ be a subset of an algebra $A$. Then $\sigma_L$ is determined
  by a finite set of terms if and only if it is determined by some set
  of the form $F_T$, where $T$ is a finite subset of
  $T_\Omega(\{x_1,\dots,x_{k+1}\})$ for some $k\ge0$, consisting of
  terms linear in $x_1$, and $F\subseteq M(A)$.
\end{Lemma}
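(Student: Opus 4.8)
The plan is to observe that, because $F\subseteq M(A)$ and the terms of $T$ are linear in $x_1$, the set $F_T$ is squeezed between two sets that both determine $\sigma_L$ for formal reasons, so that the whole statement reduces to a single linearization step. Throughout I use the reformulation of ``$S$-determined'' as $\sigma_L=\bigcap_{f\in S}\alpha_{f^{-1}(L)}$, together with the fact from Lemma~\ref{l:synt-congr} that $\sigma_L$ is $M(A)$-determined. The key preliminary remark is that, for a term $t$ linear in $x_1$ and any $v\in A^k$, the partial evaluation $(t_A)^\sharp(v)\colon y\mapsto t_A(y,v)$ is a polynomial transformation given by a term linear in the distinguished variable, hence lies in $M(A)$. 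Consequently $G_T:=\bigcup_{t\in T}(t_A)^\sharp(A^k)\subseteq M(A)$, and by definition $F_T=F\cap G_T\subseteq G_T$.

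For the implication ($\Leftarrow$), suppose $\sigma_L$ is $F_T$-determined. Since $F_T\subseteq G_T\subseteq M(A)$, we have $\bigcap_{f\in G_T}\alpha_{f^{-1}(L)}\subseteq\bigcap_{f\in F_T}\alpha_{f^{-1}(L)}=\sigma_L$, while $\sigma_L$ being $M(A)$-determined together with $G_T\subseteq M(A)$ gives the reverse inclusion $\sigma_L\subseteq\bigcap_{f\in G_T}\alpha_{f^{-1}(L)}$; hence $G_T$ determines $\sigma_L$. Unwinding the definition of $G_T$, this says precisely that $(a,a')\in\sigma_L$ holds if and only if $t_A(a,v)\in L\iff t_A(a',v)\in L$ for every $t\in T$ and every $v\in A^k$, i.e.\ the finite set $T$ determines $\sigma_L$ by terms.

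For the converse ($\Rightarrow$), I would start from a finite set $F_0$ of terms determining $\sigma_L$ and linearize each of them. Given $t\in F_0$ in which $x_1$ occurs $r\ge1$ times (terms not involving $x_1$ impose no condition and may be discarded), introduce fresh variables $w_1,\dots,w_{r-1}$ and, for each $j\in\{1,\dots,r\}$, let $\ell_j$ be the term obtained from $t$ by replacing its $j$-th occurrence of $x_1$ by the distinguished variable and its remaining occurrences by $w_1,\dots,w_{r-1}$; thus $\ell_j$ is linear in $x_1$. Let $T$ be the finite collection of all such $\ell_j$ as $t$ ranges over $F_0$, regarded inside a common $T_\Omega(\{x_1,\dots,x_{k+1}\})$ after padding with unused variables. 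I claim $T$ determines $\sigma_L$; combined with the choice $F=M(A)$, for which $F_T=G_T$, this exhibits the required set $F_T$. The inclusion $\sigma_L\subseteq\bigcap_{\ell\in T}\alpha_{\ell^{-1}(L)}$ is immediate from the preliminary remark. For the reverse inclusion, suppose $(a,a')$ passes all the $T$-tests and fix $t\in F_0$ and parameters $b_2,\dots,b_m$. The main point is a telescoping argument: writing $c_j$ for the value of $t_A$ at the tuple whose first $j$ occurrences of $x_1$ equal $a'$ and whose remaining occurrences equal $a$ (all other variables set to the $b_i$), the passage from $c_{j-1}$ to $c_j$ is exactly the $\ell_j$-test at the specialization $w_1=\dots=w_{j-1}=a'$, $w_j=\dots=w_{r-1}=a$, so the hypothesis gives $c_{j-1}\in L\iff c_j\in L$. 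Chaining these equivalences over $j=1,\dots,r$ yields $c_0\in L\iff c_r\in L$, that is $t_A(a,b_2,\dots,b_m)\in L\iff t_A(a',b_2,\dots,b_m)\in L$; as the $b_i$ were arbitrary and $F_0$ determines $\sigma_L$, we conclude $(a,a')\in\sigma_L$.

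The main obstacle is the bookkeeping in the ($\Rightarrow$) direction: one must check that the specializations of the auxiliary variables $w_i$ realizing each telescoping step are genuinely instances quantified over by the $T$-tests (they are, since those tests quantify over all parameter values in $A$), and that the finitely many $\ell_j$ arising from all $t\in F_0$ can be housed in a single term algebra. Both are routine once the indexing of occurrences of $x_1$ in each $t$ is fixed.
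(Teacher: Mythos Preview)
Your argument is correct and follows essentially the same route as the paper's proof: the forward direction is handled by linearizing each term via a telescoping replacement of the occurrences of $x_1$ one at a time, and the backward direction by sandwiching $F_T$ between itself and $M(A)$. The only cosmetic difference is that the paper economizes on auxiliary variables, using just two (one for the already-switched occurrences, one for the not-yet-switched ones) in place of your $r-1$ fresh variables $w_1,\dots,w_{r-1}$; your more explicit treatment of the ``if'' direction is also a slight improvement over the paper's one-word ``trivial''.
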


\begin{proof}
  The if part of the statement of the lemma is trivial. For the
  converse, we first associate with each term $t\in
  T_\Omega(\{x_1,\ldots x_{k+1}\})$ a term $s\in T_\Omega(X)$ with
  $X=\{y_1,\ldots,y_r,x_2,\ldots,x_{k+1}\}$ by replacing each
  occurrence of $x_1$ by a distinct $y_i$. Let
  \begin{displaymath}
    s_i=s_{T_\Omega(X)}(y^{(i-1)},x,z^{(r-i)},x_2\ldots,x_{k+1})
    \in T_\Omega(x,y,z,x_2,\ldots,x_{k+1}),
  \end{displaymath}
  where $u^{(\ell)}$ stands for $\ell$ components equal to $u$. Then
  each term $s_i$ is linear in $x$ and the following formulas hold for
  $a,a'\in A$ and $v\in A^k$:
  \begin{align*}
    (s_1)_A^\sharp(a,a',v)(a')&=t_A^\sharp(v)(a')\\
    (s_r)_A^\sharp(a,a',v)(a)&=t_A^\sharp(v)(a)\\
    (s_i)_A^\sharp(a,a',v)(a)&=(s_{i+1})_A^\sharp(a,a',v)(a')
    \quad(i=1,\ldots,r-1).
  \end{align*}
  It follows that
  \begin{align*}
    \sigma_L
    &\subseteq
    \bigcap_{i=1,\ldots,r;\ b,b'\in A}
    \bigl((s_i)_A^\sharp(b,b',v)\times(s_i)_A^\sharp(b,b',v)\bigr)^{-1}(\alpha_L) \\
    &\subseteq
    \bigl(t_A^\sharp(v)\times t_A^\sharp(v)\bigr)^{-1}(\alpha_L),
  \end{align*}
  which shows that $\sigma_L$ is also determined by a finite set of
  terms that are linear in~$x_1$ by simply taking, for a finite set of
  terms determining $\sigma_L$, the union of the sets of terms
  constructed above for each term in the given set.
\end{proof}

The next lemma examines how the operation $F\mapsto F_T$ behaves with
respect to topological closure when $T$ is a finite set of terms.

\begin{Lemma}
  \label{l:closure}
  Let $A$ be a compact algebra, $F$ be a subset of $\cont(A)$ and $T$
  be a finite set of terms in $T_\Omega(\{x_1,\dots,x_{k+1}\})$. Then,
  $\overline{F_T}$ is contained in~$\overline{F}_T$.
\end{Lemma}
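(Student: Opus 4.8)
The plan is to reduce the statement to the claim that the set $C=\bigcup_{t\in T}t_A^\sharp(A^k)$ is closed in $\cont(A)$, after which the inclusion becomes a formal consequence of basic properties of closure. Indeed, by the definitions recalled above, $F_T=F\cap C$ and $\overline{F}_T=\overline{F}\cap C$. Since the closure of an intersection is contained in the intersection of the closures, and since $\overline{C}=C$ once $C$ is known to be closed, I get
\[
\overline{F_T}=\overline{F\cap C}\subseteq\overline{F}\cap\overline{C}=\overline{F}\cap C=\overline{F}_T,
\]
which is precisely the asserted inclusion.

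The heart of the argument is therefore the claim that $C$ is closed. Because $T$ is finite, it is enough to show that each set $t_A^\sharp(A^k)$ is compact: a finite union of compact subsets is compact, and a compact subset of the Hausdorff space $\cont(A)$ is closed. (Here $\cont(A)$ is Hausdorff because $A$ is compact, hence Hausdorff, and the compact-open topology refines the topology of pointwise convergence, which is Hausdorff on $\cont(A)\subseteq A^A$.) To prove compactness of $t_A^\sharp(A^k)$, I regard $t_A$ as an element of $\cont(A\times A^k,A)$, with the first factor carrying the distinguished variable and the factor $A^k$ carrying the remaining arguments $v$. Since $A$ is compact, hence locally compact, Proposition~\ref{p:compact-open}(3) applies and yields that $t_A^\sharp$ is a continuous map $A^k\to\cont(A)$. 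As $A^k$ is compact, being a finite product of copies of the compact space $A$, its continuous image $t_A^\sharp(A^k)$ is compact, as required.

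The only genuine content is the compactness of the images $t_A^\sharp(A^k)$, which rests entirely on the continuity provided by Proposition~\ref{p:compact-open}(3) together with compactness of $A^k$; everything else is formal manipulation of closures. Two hypotheses are used in an essential way and are worth flagging: compactness of $A$ (both to make $A^k$ compact and to supply the local compactness needed for Proposition~\ref{p:compact-open}(3)), and finiteness of $T$ (to pass from compactness of each $t_A^\sharp(A^k)$ to closedness of their union, since an infinite union of compact sets need not be closed). I expect no real obstacle beyond correctly setting up the partial evaluation so that Proposition~\ref{p:compact-open}(3) can be invoked.
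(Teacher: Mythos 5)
Your proof is correct, but it takes a genuinely different route from the paper's. The paper proves Lemma~\ref{l:closure} by a direct net argument: given $f\in\overline{F_T}$, it writes $f$ as the limit of a net $\{(t_i)_A^\sharp(v_i)\}_{i\in I}$ in $F_T$, uses finiteness of $T$ and compactness of $A^k$ to pass to a subnet along which $t_{i_j}=t$ is constant and $v_{i_j}\to v$, and concludes $f=t_A^\sharp(v)\in\overline{F}_T$ by continuity of $t_A^\sharp$ (Proposition~\ref{p:compact-open}(3)). You instead isolate the claim that $C=\bigcup_{t\in T}t_A^\sharp(A^k)$ is compact, hence closed in the Hausdorff space $\cont(A)$, and then obtain the inclusion by the purely formal computation $\overline{F_T}=\overline{F\cap C}\subseteq\overline{F}\cap\overline{C}=\overline{F}\cap C=\overline{F}_T$. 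The key ingredients are identical (continuity of $t_A^\sharp$ from Proposition~\ref{p:compact-open}(3), compactness of $A^k$, finiteness of $T$), but your decomposition buys two things: it makes explicit the Hausdorffness of $\cont(A)$, which the paper's proof uses tacitly when it identifies the limit $f$ of the subnet with the limit $t_A^\sharp(v)$; and the compactness of each $t_A^\sharp(A^k)$, which you establish here, is exactly the fact the paper re-derives immediately afterwards in the proof of Proposition~\ref{p:lc-compactly-determined}, so your organization would allow that proposition to quote the compactness of $\overline{F}_T=\overline{F}\cap C$ directly rather than repeating the argument. What the paper's net computation buys in exchange is self-containment: it never needs the (easy, but extra) verification that the compact-open topology on $\cont(A)$ is Hausdorff.
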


\begin{proof}
  Let $f\in\overline{F_T}$. Then, we can write $f$ as a
  limit
  \begin{equation*}
    f = \lim_{i\in I}f_i,
  \end{equation*}
  where $\{f_i\}_{i\in I}$ is a net in $F_T$. For each $i\in I$,
  choose $t_i\in T$ and $v_i\in A^k$ such that $f_i =
  (t_i)_A^\sharp(v_i)$. Since $T$ is finite and $A^k$ is compact, we
  may extract a subnet $\{f_{i_j}\}_{j\in J}$ such that
  $\{t_{i_j}\}_{j\in J}$ takes a constant value $t\in T$ and
  $\{v_{i_j}\}_{j\in J}$ converges to $v$ in~$A^k$. By
  Proposition~\ref{p:compact-open}(3), $t_A^\sharp$ is continuous and
  it follows that
  \begin{equation*}
    f = \lim_{j\in J}f_{i_j}
    = \lim_{j\in J} t_A^\sharp(v_{i_j})
    = t_A^\sharp(v).
  \end{equation*}
  Hence, $f$ lies in $\overline{F}_T$.
\end{proof}

We are now ready to achieve the goal announced at the beginning of
this subsection.

\begin{Prop}
  \label{p:lc-compactly-determined}
  Let $A$ be a 
  compact algebra, and $L$ be a clopen subset of $A$. If $\sigma_L$ is
  determined by a finite set of terms, then it is compactly
  determined.
\end{Prop}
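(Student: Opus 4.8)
The plan is to take a finite set of terms $T\subseteq T_\Omega(\{x_1,\dots,x_{k+1}\})$ determining $\sigma_L$ and, using the auxiliary operation $F\mapsto F_T$ together with Lemma~\ref{l:from-terms-to-linear-terms}, produce a \emph{compact} subset of $\cont(A)$ that still determines $\sigma_L$. By Lemma~\ref{l:from-terms-to-linear-terms}, I may assume that every term in $T$ is linear in $x_1$, so that $F_T\subseteq M(A)\subseteq\cont(A)$ when $F=M(A)$. The natural candidate for the compact determining set is $C=\overline{M(A)}_T=\{t_A^\sharp(v):t\in T,\ v\in A^k\}$, the set of all polynomial transformations obtained from the (finitely many) terms in $T$ by substituting arbitrary parameter vectors $v\in A^k$ into the non-distinguished variables.

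The key steps, in order, are as follows. First I would argue that $C$ determines $\sigma_L$: since $T$ determines $\sigma_L$ in the sense of terms, the defining condition ``$\forall t\in T\ \forall v\in A^k\ (t_A^\sharp(v)(a)\in L\iff t_A^\sharp(v)(a')\in L)$'' is literally the condition that $(a,a')$ lie in $\bigcap_{f\in C}(f\times f)^{-1}(\alpha_L)$, because the functions $t_A^\sharp(v)$ as $t$ ranges over $T$ and $v$ over $A^k$ are exactly the elements of $C$. Hence $\sigma_L=\bigcap_{f\in C}\alpha_{f^{-1}(L)}$, i.e.\ $\sigma_L$ is $C$-determined. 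Second, and this is the substantive topological point, I would show that $C$ is compact. Here I use Proposition~\ref{p:compact-open}(3): for each fixed $t\in T$, the map $t_A^\sharp:A^k\to\cont(A)$ is continuous (taking $Y=A$, $Z=A^k$, $X=A$ in that proposition, noting $A$ is locally compact as it is compact). Since $A^k$ is compact, its continuous image $t_A^\sharp(A^k)$ is compact, and $C=\bigcup_{t\in T}t_A^\sharp(A^k)$ is a finite union of compact sets, hence compact. This is essentially the content already extracted in the proof of Lemma~\ref{l:closure}, which identifies $\overline{M(A)}_T$ with $\{t_A^\sharp(v):t\in T,v\in A^k\}$.

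The main obstacle I anticipate is purely bookkeeping: matching the term-based determination condition to the function-based ($C$-determined) condition requires care that every function appearing on one side appears on the other, which is exactly why the reduction to terms linear in $x_1$ via Lemma~\ref{l:from-terms-to-linear-terms} is invoked first. With that reduction in hand, $t_A^\sharp(v)$ is a genuine element of $\cont(A)$ (indeed of $M(A)$), so no subtlety about the codomain of $t_A^\sharp$ remains. The compactness argument itself is routine once Proposition~\ref{p:compact-open}(3) is applied termwise; the only hypothesis genuinely used is that $A$ (hence each $A^k$) is compact, which is assumed. I do not expect to need that $L$ is clopen for this direction at all—the clopenness of $L$ matters for the \emph{equivalence} with clopenness of $\sigma_L$ established elsewhere (Theorem~\ref{t:lc-syntactic-clopen}), but the present implication, ``finitely determined by terms $\Rightarrow$ compactly determined,'' only needs compactness of $A$ and the continuity in Proposition~\ref{p:compact-open}(3). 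Thus the proof reduces to: set $C=\{t_A^\sharp(v):t\in T,\ v\in A^k\}$, observe it is compact by the finite-union-of-continuous-images argument, and verify directly that $C$-determination coincides with $T$-determination.
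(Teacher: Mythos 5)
Your proof is correct, and it is a genuine streamlining of the paper's argument rather than a reproduction of it. Both proofs start from Lemma~\ref{l:from-terms-to-linear-terms} and end with the same compactness mechanism: Proposition~\ref{p:compact-open}(3) makes each $t_A^\sharp\colon A^k\to\cont(A)$ continuous, so $t_A^\sharp(A^k)$ is compact and a finite union of such sets is compact. The difference is in the middle. The paper works from the literal statement of Lemma~\ref{l:from-terms-to-linear-terms}, which only provides a determining set of the form $F_T$ for some unspecified $F\subseteq M(A)$; since such an $F_T$ need not be compact, the paper must pass to closures, invoking Proposition~\ref{p:closure-determine} (so that $\overline{F_T}$ still determines $\sigma_L$) and Lemma~\ref{l:closure} (to get $\overline{F_T}\subseteq\overline{F}_T\subseteq M(A)$ and then sandwich the determination property), before applying the compactness argument to $\overline{F}_T$. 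You instead take $F=M(A)$ --- equivalently, you use the paper's own gloss of the lemma, namely that the finite set of determining terms may be assumed linear in $x_1$ --- so your determining set $C=\bigcup_{t\in T}t_A^\sharp(A^k)$ is the full set of specializations, term-determination by $T$ is literally $C$-determination, and $C$ is compact outright, with no closure step. This buys two things: your proof does not depend on Proposition~\ref{p:closure-determine} or Lemma~\ref{l:closure}, and, as you correctly note, it never uses that $L$ is clopen (the paper's route does, since Proposition~\ref{p:closure-determine} needs $\alpha_L$ closed), so your argument actually proves the implication for an arbitrary subset $L$ of a compact algebra. Two minor quibbles: your parenthetical crediting the identification $\overline{M(A)}_T=\{t_A^\sharp(v):t\in T,\ v\in A^k\}$ to the proof of Lemma~\ref{l:closure} is off --- that identity is immediate from $\bigcup_{t\in T}t_A^\sharp(A^k)\subseteq M(A)$, whereas the content of Lemma~\ref{l:closure} is the inclusion $\overline{F_T}\subseteq\overline{F}_T$, which you never need; and in fact even the linearization step is dispensable for this implication, since for a non-linear term $t$ the specializations $t_A^\sharp(v)$ still lie in $\cont(A)$, which is all that compact determination requires.
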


\begin{proof}
  By Lemma~\ref{l:from-terms-to-linear-terms}, there is a finite
  subset $T$ of~$T_\Omega(\{x_1,\dots,x_{k+1}\})$ consisting of terms
  linear in $x_1$ and a subset $F$ of $M(A)$ such that
  $F_T$ determines $\sigma_L$. Then, by Proposition
  \ref{p:closure-determine}, $\sigma_L$ is also determined by
  $\overline{F_T}$. By
  Lemma~\ref{l:closure}, we have
  \begin{equation*}
    \overline{F_T}
    \subseteq
    \overline{F}_T
    \subseteq
    M(A).
  \end{equation*}
  Since both $\overline{F_T}$ and $M(A)$ determine $\sigma_L$, so does
  $\overline{F}_T$.
  We claim that $\overline{F}_T$ is compact. Indeed, for
  each $t\in T$, we have
  \begin{equation*}
    \overline{F}_t
    = \overline{F}\cap t_A^\sharp(A^k).
  \end{equation*}
  But note that $t_A^\sharp(A^k)$ is compact, because $A^k$ is compact
  and $t_A^\sharp$ is continuous by
  Proposition~\ref{p:compact-open}(3). Therefore, $\overline{F}_t$,
  being an intersection of a closed with a compact subset, is itself
  compact. It follows that $\overline{F}_T =\bigcup_{t\in
    T}\overline{F}_t$ is compact, as claimed.
\end{proof}

Combining Theorem~\ref{t:lc-syntactic-clopen} and
Propositions~\ref{p:finite-determined}
and~\ref{p:lc-compactly-determined}, we obtain the following main
result of this section.

\begin{Thm}
  \label{t:syntactic-clopen-in-compact-algebra}
  The following conditions are equivalent for a compact algebra $A$
  and a subset $L\subseteq A$:
  \begin{enumerate}
  \item $\sigma_L$ is clopen;
  \item $L$ is clopen and $\sigma_L$ is compactly determined;
  \item $L$ is clopen and $\sigma_L$ is finitely determined;
  \item $L$ is clopen and $\sigma_L$ is finitely determined by a set
    of terms;
  \item the quotient algebra $A/\sigma_L$ is finite and discrete.
  \end{enumerate}
\end{Thm}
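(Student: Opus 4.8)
The plan is to organize everything around the two ``central'' conditions $(1)$ and $(2)$, and to invoke the three substantial results already available: Theorem~\ref{t:lc-syntactic-clopen}, Proposition~\ref{p:finite-determined}, and Proposition~\ref{p:lc-compactly-determined}. Since a compact (hence Hausdorff) space is locally compact, Theorem~\ref{t:lc-syntactic-clopen} applies verbatim and yields $(1)\Leftrightarrow(2)$ at once. It then suffices to tie conditions $(3)$, $(4)$ and $(5)$ to this pair, which I would do by establishing $(1)\Leftrightarrow(5)$ topologically and then routing $(3)$ and $(4)$ through $(5)$ and $(2)$.

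First I would prove $(1)\Leftrightarrow(5)$ by elementary topology, using only that $A$ is compact. If $\sigma_L$ is clopen, then by the remarks opening Section~\ref{sec:top-algebras} it is an open equivalence relation, so each of its classes is clopen; these classes partition the compact space $A$ into open sets, hence there are finitely many, and each singleton of $A/\sigma_L$ has open preimage under the quotient map, so $A/\sigma_L$ is finite and discrete. Conversely, if $A/\sigma_L$ is finite and discrete, then each class is the preimage of a point, hence clopen, and $\sigma_L=\bigcup_i C_i\times C_i$ is a finite union of clopen rectangles, hence clopen. In both directions, since $\sigma_L$ saturates $L$ by Lemma~\ref{l:synt-congr} and its classes are clopen, $L$ is itself clopen; this supplies the clause ``$L$ is clopen'' demanded in $(2)$, $(3)$ and $(4)$.

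Next I would close the loop through the determination conditions. The finite index of $\sigma_L$ furnished by $(5)$ lets me apply Proposition~\ref{p:finite-determined} to obtain a finite set $F\subseteq M(A)\subseteq\cont(A)$ determining $\sigma_L$; combined with $L$ clopen this gives $(5)\Rightarrow(3)$, while $(3)\Rightarrow(2)$ is immediate since a finite set is compact. For $(5)\Rightarrow(4)$, I would revisit the construction inside the proof of Proposition~\ref{p:finite-determined}: each $f\in F$ arises as $x\mapsto t_A(x,a_2,\dots,a_{k+1})$ for a term $t$ linear in $x_1$ and fixed parameters, and I would collect the finite set $T$ of these terms. Replacing the fixed parameters by universally quantified variables only strengthens the defining condition, so the relation determined by $T$ is contained in $\sigma_L$; conversely, every instance $x\mapsto t_A(x,b_2,\dots,b_{k+1})$ with $t\in T$ lies in $M(A)$ by linearity in $x_1$, so Lemma~\ref{l:synt-congr} gives the reverse inclusion, and $T$ determines $\sigma_L$ by terms. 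Finally $(4)\Rightarrow(2)$ is exactly Proposition~\ref{p:lc-compactly-determined}. Chaining these implications yields $(1)\Leftrightarrow(2)\Leftrightarrow(5)$ together with $(5)\Rightarrow(3)\Rightarrow(2)$ and $(5)\Rightarrow(4)\Rightarrow(2)$, whence all five conditions are equivalent.

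The only step requiring genuine care is $(5)\Rightarrow(4)$, namely passing from a determining finite family of polynomial transformations to a determining finite family of \emph{terms}. The subtlety is that finite determination by terms quantifies over all parameter values $b_2,\dots,b_{k+1}$, so one must verify that enlarging the family of transformations in this way neither drops any pair of $\sigma_L$ (which needs linearity in $x_1$, so that each instance belongs to $M(A)$ and Lemma~\ref{l:synt-congr} applies) nor adds a spurious one (which holds because the original fixed parameters sit among those now quantified). Everything else is a direct citation or a routine compactness argument, with the clause ``$L$ clopen'' carried through automatically from the equivalence $(1)\Leftrightarrow(5)$.
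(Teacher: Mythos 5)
Your proof is correct and takes essentially the same route as the paper, whose own proof is a one-sentence combination of Theorem~\ref{t:lc-syntactic-clopen}, Proposition~\ref{p:finite-determined}, and Proposition~\ref{p:lc-compactly-determined}. You merely make explicit the glue the paper leaves to the reader: the elementary equivalence $(1)\Leftrightarrow(5)$ on a compact space, and the extraction of a determining finite set of linear terms from the polynomial transformations constructed in Proposition~\ref{p:finite-determined}.
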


The following example shows that it is not possible to extend
Proposition~\ref{p:lc-compactly-determined} for the case of locally
compact algebras. In fact, we exhibit a locally compact semigroup and
a clopen subset whose syntactic congruence is not clopen. This
syntactic congruence is not compactly determined by
Theorem~\ref{t:lc-syntactic-clopen}, while it is determined by a
finite set of terms as in fact every syntactic congruence of a
semigroup has this property.
 					
\begin{eg}
  \label{eg:counterexample-converse-lc-compactly-determined}
  We consider the topological semigroup obtained as the direct product
  of the following locally compact semigroups $A$ and $B$, so that it
  is locally compact. Let $A=\mathbb N$ be the discrete semigroup of
  natural numbers with maximum as operation, and let $B$ be the one
  point compactification of the usual additive semigroup of natural
  numbers $\mathbb N$. While the description of the locally compact
  semigroup $A$ is clear, we describe the compact semigroup $B$ in
  more detail: we have $B=\mathbb N\cup \{\infty\}$, where $\infty$ is
  the new point for which we put $\infty + n=n + \infty=\infty
  +\infty=\infty$, for $n\in \mathbb N$. We also recall that the open
  sets in $B$ are all subsets of $\mathbb N$ together with the subsets
  of the form $\{\infty\}\cup(\mathbb N\setminus F)$, where $F$ is a
  finite subset of $\mathbb N$.

  Now we take $L=\{(n,n) \mid n\in \mathbb N\}$, which is clopen in
  $A\times B$ because
  \begin{displaymath}
    L=\bigcup_{n\in \mathbb N} \{(n,n)\}
    \quad \text{and} \quad 
    (A\times B ) \setminus L
    =\bigcup_{n\in \mathbb N} \{n\}\times (B\setminus \{n\}).
  \end{displaymath}
  We claim that one class of the syntactic congruence $\sigma_L$ is
  $A\times \{\infty\}$. All elements in $A\times \{\infty\}$ are
  $\sigma_L$-related as it is not possible to multiply them by any
  element and obtain a result in $L$. To show that elements from
  $A\times \{\infty\}$ are not $\sigma_L$-related with other elements,
  consider pairs $(i,j)$ and $(k,\infty)$ with $i,j,k\in \mathbb N$.
  Then $(i+j,i) \cdot (i,j)=(i+j,i+j)\in L$ but $(i+j,i) \cdot
  (k,\infty)=(\max\{i+j,k\},\infty)\not \in L$. Hence, $(i,j)$ and
  $(k,\infty)$ are not $\sigma_L$-equivalent. Finally, we claim that
  $A\times \{\infty\}$ is not open, which establishes that $\sigma_L$
  is not clopen. To show that $A\times \{\infty\}$ is not open, recall
  first that in the product space $A\times B$, a base of the topology
  consists of open subsets $O \times O'$ with $O$ and $O'$ open
  subsets respectively of $A$ and $B$. However, if we consider
  $(k,\infty)$ in such $O \times O'$, then there is also some element
  $(k,\ell)$ in $O \times O'$ with the same first coordinate and $\ell
  \in\mathbb N$.

  This example has %
  another feature that it is worth noting.
  The algebra $A\times B$ is residually finite. Hence, while the
  condition that the syntactic congruence of a clopen subset of a
  locally compact 0-dimensional algebra is always clopen implies that
  the algebra is residually discrete, the converse fails even under
  the stronger assumption of residual finiteness. This is in contrast
  with the case of compact algebras, for which the two conditions are
  equivalent by Theorem~\ref{t:profinite-synt-congr}.\qed
\end{eg}

\section{Summary of results and conclusion}
\label{sec:summary}

In conclusion, we have the following result building on the various
characterizations of profiniteness in Stone topological algebras
presented in this paper.

\begin{Thm}
  \label{t:summary}
  The following conditions are equivalent for a Stone topological
  algebra $A$:
  \begin{enumerate}
  \item\label{item:summary-1} $A$ is profinite;
  \item\label{item:summary-5} for every clopen subset $L\subseteq A$,
    $\sigma_L$ is a clopen congruence;
  \item\label{item:summary-2} $M(A)$ is equicontinuous;
  \item\label{item:summary-3} $M(A)$ is relatively compact in~$\Cl
    C(A)$;
  \item\label{item:summary-4} the closure of $M(A)$ in~$\Cl C(A)$
    is a profinite submonoid;
  \item\label{item:summary-6} for every clopen subset $L\subseteq
    A$, there exists a continuous homomorphism $\varphi:A\to B$
    onto a finite algebra $B$ such that
    $L=\varphi^{-1}(\varphi(L))$;
  \item\label{item:summary-7} for every clopen subset $L\subseteq A$,
    the congruence $\sigma_L$ is determined by some finite set of
    terms;
  \item\label{item:summary-8} for every clopen subset $L\subseteq A$,
    the congruence $\sigma_L$ is $F$-determined by some finite subset
    $F$ of $M(A)$;
  \item\label{item:summary-9} for every clopen subset $L\subseteq A$,
    the congruence $\sigma_L$ is $F$-determined by some finite subset
    $F$ of $\Cl C(A)$;
  \item\label{item:summary-10} for every clopen subset $L\subseteq A$,
    the congruence $\sigma_L$ is $C$-determined by some compact subset
    $C$ of $\Cl C(A)$.
  \end{enumerate}
\end{Thm}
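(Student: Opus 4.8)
The plan is to prove Theorem~\ref{t:summary} as a cycle (or web) of implications assembled almost entirely from results already established earlier in the paper, so that very little new work is required. The central hub is condition~\eqref{item:summary-5}: by Theorem~\ref{t:profinite-synt-congr}, \eqref{item:summary-1} and \eqref{item:summary-5} are equivalent outright, so I would start by recording that equivalence. Conditions~\eqref{item:summary-2} and~\eqref{item:summary-3} are handled by Theorems~\ref{t:profinite-equicontinuous} and~\ref{t:profinite-rel-compact} respectively, each giving equivalence with \eqref{item:summary-1} directly. Thus the first paragraph of the proof can simply cite these three theorems to dispatch the equivalences $\eqref{item:summary-1}\Leftrightarrow\eqref{item:summary-5}\Leftrightarrow\eqref{item:summary-2}\Leftrightarrow\eqref{item:summary-3}$.

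Next I would connect the remaining conditions to this core. For \eqref{item:summary-6}, the forward implication $\eqref{item:summary-1}\Rightarrow\eqref{item:summary-6}$ is exactly the content of \cite[Lemma~4.1]{Almeida:2002a} invoked in the proof of Theorem~\ref{t:profinite-synt-congr}; for the reverse, given such a $\varphi$, its kernel is a clopen congruence saturating $L$, whence (by Lemma~\ref{l:synt-congr} and the argument already used in the $(\Rightarrow)$ direction of Theorem~\ref{t:profinite-synt-congr}) $\sigma_L$ contains a clopen congruence and is therefore clopen, giving \eqref{item:summary-5}. For the determination conditions~\eqref{item:summary-7}--\eqref{item:summary-10}, the key observation is that $A$ is compact (being Stone), so Theorem~\ref{t:syntactic-clopen-in-compact-algebra} applies to every clopen $L$: it tells us that $\sigma_L$ is clopen if and only if $L$ is clopen and $\sigma_L$ is compactly determined, if and only if it is finitely determined, if and only if it is finitely determined by a set of terms. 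Quantifying over all clopen $L$ converts this directly into the equivalence of \eqref{item:summary-5} with each of \eqref{item:summary-7}, \eqref{item:summary-9}, and \eqref{item:summary-10}. Condition~\eqref{item:summary-8} (finite determination by a subset of $M(A)$) sits between \eqref{item:summary-7} and \eqref{item:summary-9}: it implies \eqref{item:summary-9} trivially since $M(A)\subseteq\cont(A)$, and it follows from \eqref{item:summary-5} via Proposition~\ref{p:finite-determined}, whose hypothesis that $\sigma_L$ has finite index is guaranteed once $\sigma_L$ is clopen on a compact algebra.

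The only condition requiring genuine attention is~\eqref{item:summary-4}, asserting that $\overline{M(A)}$ is a profinite submonoid of $\cont(A)$. The implication $\eqref{item:summary-4}\Rightarrow\eqref{item:summary-3}$ is immediate, since a profinite monoid is compact and $M(A)$ is contained in the compact set $\overline{M(A)}$, making $M(A)$ relatively compact. For the converse $\eqref{item:summary-3}\Rightarrow\eqref{item:summary-4}$, I would argue that when $M(A)$ is relatively compact, $\overline{M(A)}$ is a compact topological monoid (it is a submonoid by the remark following Proposition~\ref{p:compact-open}, and compactness of composition makes it a topological monoid), and then that it is a Stone space admitting enough continuous homomorphisms into finite monoids; concretely, each clopen partition of $A$ by a clopen congruence $\theta=\bigcap_i\sigma_{L_i}^A$ induces, via the action of $\overline{M(A)}$ on the finite quotient $A/\theta$, a continuous homomorphism $\overline{M(A)}\to M(A/\theta)$ into a finite monoid, and these separate points because, by the entourage discussion preceding Theorem~\ref{t:profinite-equicontinuous}, the clopen congruences generate the uniform structure of $A$.

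I expect this last step---establishing \eqref{item:summary-4} and in particular that $\overline{M(A)}$ is residually finite---to be the main obstacle, since it is the one assertion not packaged by a prior theorem; everything else is bookkeeping over already-proven equivalences. The cleanest route is likely to observe that profiniteness of $A$ (available once we are inside the equivalence class) makes $A$ an inverse limit of finite quotients $A/\theta_i$, so that $\overline{M(A)}$ embeds as a closed submonoid of the inverse limit $\varprojlim M(A/\theta_i)$ of finite monoids, hence is profinite. I would therefore prove $\eqref{item:summary-1}\Rightarrow\eqref{item:summary-4}\Rightarrow\eqref{item:summary-3}$, closing the cycle through the already-established equivalence of \eqref{item:summary-3} with \eqref{item:summary-1}.
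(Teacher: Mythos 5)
Your proposal is correct, and for most of the ten conditions it coincides with the paper's own proof: the core equivalences \eqref{item:summary-1}$\,\Leftrightarrow\,$\eqref{item:summary-5}$\,\Leftrightarrow\,$\eqref{item:summary-2}$\,\Leftrightarrow\,$\eqref{item:summary-3} are dispatched by citing Theorems~\ref{t:profinite-synt-congr}, \ref{t:profinite-equicontinuous} and~\ref{t:profinite-rel-compact}; condition~\eqref{item:summary-6} is handled by reusing the two halves of the proof of Theorem~\ref{t:profinite-synt-congr}; and conditions~\eqref{item:summary-7}--\eqref{item:summary-10} follow from Theorem~\ref{t:syntactic-clopen-in-compact-algebra} together with Proposition~\ref{p:finite-determined} (your explicit appeal to the latter for~\eqref{item:summary-8} is in fact slightly more careful than the paper's wording, since Theorem~\ref{t:syntactic-clopen-in-compact-algebra} as stated says nothing about $M(A)$-determination). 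The genuine divergence is in the one step that both you and the paper single out as requiring new work, namely \eqref{item:summary-3}$\,\Rightarrow\,$\eqref{item:summary-4}. The paper proves that $\cont(A)$ is $0$-dimensional, using the identity $\cont(A)\setminus[K,L]=\bigcup_{a\in K}[a,A\setminus L]$ to show $[K,L]$ is clopen for clopen $K,L$; hence $\overline{M(A)}$ is a Stone topological monoid, and profiniteness then rests (implicitly) on the fact that Stone topological monoids are profinite, available from the paper's own machinery because monoid syntactic congruences are finitely determined by terms. You instead verify the definition of profiniteness directly: $\overline{M(A)}$ is compact by~\eqref{item:summary-3}, and it is residually finite because every clopen congruence $\theta$ on~$A$ induces a continuous homomorphism $\overline{M(A)}\to M(A/\theta)$ onto a finite monoid, these homomorphisms separating points; equivalently, $\overline{M(A)}$ embeds as a closed submonoid of $\varprojlim M(A/\theta_i)$. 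Both routes work. The paper's buys brevity and never needs to discuss actions on quotients; yours avoids both the $0$-dimensionality computation in $\cont(A)$ and the appeal to profiniteness of abstract Stone topological monoids, at the cost of one verification you should spell out: that every $f\in\overline{M(A)}$, and not merely every $f\in M(A)$, preserves $\theta$ and hence acts on $A/\theta$. This is a routine limit argument (if $f_i\to f$ with $f_i\in M(A)$ and $(a,a')\in\theta$, then $(f(a),f(a'))=\lim_i\bigl(f_i(a),f_i(a')\bigr)\in\theta$ because $\theta$ is closed and evaluation is continuous by Proposition~\ref{p:compact-open}(2)); continuity of $\overline{M(A)}\to M(A/\theta)$ then follows since the preimage of a transformation $t$ of $A/\theta$ is the trace on $\overline{M(A)}$ of the open set $\bigcap_{j}[C_j,C_{t(j)}]$, where the $C_j$ are the (clopen, hence compact) $\theta$-classes.
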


\begin{proof}
  By Theorems~\ref{t:profinite-synt-congr},
  \ref{t:profinite-equicontinuous} and~\ref{t:profinite-rel-compact},
  we have the equivalences
  \eqref{item:summary-5}$\,\Leftrightarrow\,$\eqref{item:summary-1}
  $\,\Leftrightarrow\,$\eqref{item:summary-2}
  $\,\Leftrightarrow\,$\eqref{item:summary-3}. In fact, in the proof
  of Theorem~\ref{t:profinite-synt-congr}, we showed that
  \eqref{item:summary-1}$\,\Rightarrow\,$\eqref{item:summary-6}$\,\Rightarrow\,$\eqref{item:summary-5},
  whence we also have
  \eqref{item:summary-1}$\,\Leftrightarrow\,$\eqref{item:summary-6}

  The implication
  \eqref{item:summary-4}$\,\Rightarrow\,$\eqref{item:summary-3} is
  obvious. To establish the reverse implication
  \eqref{item:summary-3}$\,\Rightarrow\,$\eqref{item:summary-4} it is
  enough to prove that $\overline{M(A)}$ is a 0-dimensional
  topological monoid, which holds if so is $\cont(A)$. Now, given a
  clopen subset $L\subseteq A$, we have
  \begin{displaymath}
    \Cl C(A)\setminus [K,L]=\bigcup_{a\in K}[a,A\setminus L],
  \end{displaymath}
  which shows that $[K,L]$ is closed for every subset $K$ of~$A$.
  Thus, if $K,L\subseteq A$ are clopen, then so is $[K,L]$. Hence,
  $\Cl C(A)$ is 0-dimensional and it was already observed after
  Proposition~\ref{p:compact-open} that $\cont(A)$ is a topological
  monoid.

  The equivalence of \eqref{item:summary-1}, \eqref{item:summary-7},
  \eqref{item:summary-8}, and \eqref{item:summary-10} follows from
  Theorems~\ref{t:syntactic-clopen-in-compact-algebra}
  and~\ref{t:profinite-synt-congr}.
  To conclude the proof, it remains to observe that the implications
  \eqref{item:summary-8}$\,\Rightarrow\,$\eqref{item:summary-9} and
  \eqref{item:summary-9}$\,\Rightarrow\,$\eqref{item:summary-10} are
  trivial.
\end{proof}

Some of the results of Section~\ref{sec:cdsc} suggest looking at
locally compact residually discrete algebras as a generalization of
profinite algebras and, more generally at locally compact
0-dimensional algebras as a generalization of Stone topological
algebras. However, it is not clear where such a study might lead,
perhaps for lack of interesting examples. Examples~\ref{eg:cd-not-fd}
and~\ref{eg:counterexample-converse-lc-compactly-determined} show that
much of the good behavior observed in the compact case breaks down for
locally compact algebras.

\bibliographystyle{amsplain}
\bibliography{sgpabb,ref-sgps}

\end{document}